\documentclass[11pt]{amsart}
\usepackage{amsmath,amsfonts,amssymb,graphics,color,enumerate}
\usepackage[applemac]{inputenc}
\usepackage{hyperref}
\usepackage{stmaryrd}
\usepackage{amsthm}
\usepackage{mathtools}

\newtheorem{theorem}{Theorem}[section]
\newtheorem{lemma}[theorem]{Lemma}
\newtheorem{prop}[theorem]{Proposition}

\newtheorem{Definition}[theorem]{Definition}

\def\curl{\operatorname{curl}}
\def\div{\operatorname{div}}

\providecommand{\R}{\mathbb{R}}

\renewcommand{\leq}{\leqslant}


\textwidth =17cm
\textheight=22cm
\oddsidemargin = 0cm
\evensidemargin = 0cm
\topmargin=-0.5cm


\numberwithin{equation}{section}

\begin{document}

\date{\today}
\title[Rigid bodies in $2$d Euler]{Dynamics of  rigid bodies in a two dimensional incompressible perfect fluid}

%
\author{Olivier Glass} 
\address{CEREMADE, UMR CNRS 7534, Universit\'e Paris-Dauphine, 
Place du Mar\'echal de Lattre de Tassigny, 75775 Paris Cedex 16, France} 
\author{Christophe Lacave} 
\address{Univ. Grenoble Alpes, CNRS, Institut Fourier, F-38000 Grenoble, France} 

\author{Alexandre Munnier} 
\address{Universit\'e de Lorraine, Institut Elie Cartan de Lorraine, UMR 7502, Nancy-Universit\'e, Vandoeuvre-l\`es-Nancy, F-54506, France
\& CNRS, Institut Elie Cartan de Lorraine, UMR 7502, Nancy-Universit\'e, Vandoeuvre-l\`es-Nancy, F-54506, France}
\author{Franck Sueur}
\address{Institut de Math\'ematiques de Bordeaux, UMR CNRS 5251,
Universit\'e de Bordeaux, 351 cours
de la Lib\'eration, F33405 Talence Cedex, France  $\&$ Institut  Universitaire de France}
%
%
\begin{abstract}
We consider the motion of several rigid bodies immersed in a two-dimensional incompressible perfect fluid, the whole system being bounded by an external impermeable fixed boundary. 
The fluid motion is described by the incompressible Euler equations and the motion of the rigid bodies is given by Newton's laws with forces due to the fluid pressure. 
We prove that, for smooth solutions, Newton's equations can be recast as a second-order ODE for the degrees of freedom of the rigid bodies with coefficients depending on the fluid vorticity and on the circulations around the bodies, but not anymore on the fluid pressure. 
This reformulation highlights geodesic aspects linked to the added mass effect, gyroscopic features generalizing the Kutta-Joukowski-type lift force, including body-body interactions through the potential flows induced by the bodies' motions, body-body interactions through the irrotational flows induced by the bodies' circulations, and interactions between the bodies and the fluid vorticity.
\end{abstract}
%
\maketitle
\section{Introduction}
We consider the motion of rigid bodies immersed in a two-dimensional incompressible perfect fluid. 
The whole system occupies a fixed impermeable bounded cavity $\Omega$ (a bounded simply connected open subset of $\R^2$).
At the initial time, the domain occupied by the rigid bodies are assumed to be disjoint non-empty regular connected and simply connected compact sets $\mathcal S_{ \kappa,0} \subset \Omega$, with $ \kappa \in \{1,2, \ldots , N \}$,
and
\begin{equation*}
\mathcal F_0 := \Omega \setminus \bigcup_{ \kappa \in \{1,2, \ldots , N \} } \, {\mathcal S}_{\kappa,0} , 
\end{equation*}
\par 
The rigid motion of the solid $\kappa$ is described at every moment by a rotation matrix
\begin{eqnarray*} 
R(\theta_\kappa (t)) := 
\begin{bmatrix}
\cos \theta_\kappa (t) & - \sin \theta_\kappa (t) \\
\sin \theta_\kappa (t) & \cos \theta_\kappa (t)
\end{bmatrix}, \quad \theta_\kappa (t) \in \R,
\end{eqnarray*}
and by the position $h_\kappa(t) \in \mathbb R^2$ of its center of mass  starting with the initial position $h_{ \kappa,0}$. 
The domain of the solid $\kappa$ at every time $t>0$ is therefore 
$$\mathcal S_\kappa(t) :=R(\theta_\kappa(t))(\mathcal S_{ \kappa,0}-h_{\kappa,0}) +h_\kappa(t), $$ 
while the domain of the fluid is 
$$\mathcal F(t) := \Omega\setminus \bigcup_{ \kappa \in \{1,2, \ldots , N \} } \, {\mathcal S_\kappa}(t). $$
The system is governed by the following set of coupled equations:
%
%
\begin{subequations} \label{SYS_full_system}
\begin{alignat}{3}
\nonumber \textbf{Fluid equations:} \\
\frac{\partial u}{\partial t}+(u\cdot\nabla)u +\nabla \pi&=0&\quad&\text{in }\mathcal F(t), \quad \text{for } t >0, \label{EEE1} \\
\div u&=0&&\text{in }\mathcal F(t), \quad \text{for } t >0, \label{E2} \\
u_{ \vert t=0}&=u_0&&\text{in }\mathcal F_0, \label{fluid_initial_cond} \\
\nonumber \textbf{Solids equations:} \\ \nonumber \quad \text{for any } \kappa \in \{1,2, \ldots , N \}, \\
m_\kappa h_\kappa'' (t)&=\int_{\partial\mathcal S_\kappa(t)}\pi n\, {\rm d}s,  \quad \text{for } t >0, \label{EqTrans} \\
\mathcal J_{\kappa} \theta_\kappa'' (t)&=\int_{\partial\mathcal S_\kappa(t)}(x-h_\kappa(t))^\perp\cdot \pi n\, {\rm d}s,  \quad \text{for } t >0, \label{EqRot} \\
 \Big( h_\kappa (0) , \theta_\kappa(0) , h'_\kappa (0) , \theta'_\kappa (0) \Big)
 &= \Big( h_{ \kappa,0}, 0, \ell_{ \kappa,0}, r_{ \kappa,0}
 \Big) , \\
\nonumber \textbf{Boundary conditions:} \\
\label{souslabis}
u\cdot n &=0 && \text{on } \partial\Omega,  \quad \text{for } t >0,\\
\nonumber
\quad \text{ and for any } \kappa \in \{1,2, \ldots , N \}, \\
\label{souslab}
u \cdot n &= \big( \theta'_\kappa (\cdot-h_\kappa)^\perp + h'_\kappa \big) \cdot n && \text{on }\partial\mathcal S_\kappa(t),  \quad \text{for } t >0 .
\end{alignat}
\end{subequations}
%
%
These are respectively the incompressible Euler equations, the Newton's balance law for linear and angular momenta of each body and 
impermeable boundary conditions. 
Above $u=(u_1,u_2)^t$ and $\pi$ denote the velocity and pressure fields in the fluid, $m_\kappa>0$ and $\mathcal{J}_\kappa>0$ denote respectively the mass and the moment of inertia of the body while the fluid is supposed to be homogeneous of density $1$, to simplify the notations. The exponent $t$ denotes the transpose of the vector. 
When $x=(x_1,x_2)^t$ the notation $x^\perp $ stands for $x^\perp =( -x_2 , x_1 )^t$, 
$n$ denotes the unit normal vector pointing outside the fluid, so that $n = \tau^\perp$, where $\tau$
denotes the unit counterclockwise tangential vector on $\partial \mathcal S_\kappa (t)$ and the unit clockwise tangential vector on $\partial \Omega$. 
Let us also emphasize that we will use ${\rm d}s$ as length element without any distinction on $\partial \Omega$,
$\partial \mathcal S(t)$ and on $\partial \mathcal S_{ \kappa,0}$. 
We will use the notations $\mathbf{q}_{\kappa}$ and $\mathbf{q}'_{\kappa}$ for vectors in $\R^{3}$ gathering both the linear and angular parts of the position and velocity:
$\mathbf{q}_{\kappa} := (h_{\kappa}^t,\theta_{\kappa})^t $ and
$ \mathbf{q}_{\kappa}':=(h^{\prime\, t}_{\kappa},\theta'_{\kappa})^t.$
The vectors $\mathbf{q}_{\kappa} $ and $\mathbf{q}'_{\kappa} $ are next concatenated into vectors of length $3N$:
$q = (\mathbf{q}_{1}^t , \ldots , \mathbf{q}_{N}^t)^t $ and ${q}' = (\mathbf{q}_{1}^{\prime\, t} , \ldots , \mathbf{q}^{\prime\, t}_{N} )^t, $
whose entries are relabeled respectively $q_k$ and $q'_k$ with $k$ ranging over $\{1,\ldots,3N\}$. Hence we have also:
$ q = (q_1, q_2,\ldots,q_{3N})^t $ and $ {q'} = (q'_1,q'_2,\ldots,q'_{3N})^t.$
Consequently, $k \in \{1,\dots,3N\}$ denotes the datum of both a solid number and a coordinate in $\{1,2,3\}$ so that $q_{k}$ and $q'_{k}$ denote respectively the coordinate of the position and of the velocity of a given solid. 
More precisely, for all $k \in \{1,\dots,3N\}$, we denote by $\llbracket k \rrbracket$ the quotient of the Euclidean division of $k-1$ by $3$,
$[k] = \llbracket k \rrbracket +1 \in \{1,\dots,N\}$ denotes the number of the solid and $(k) := k - 3\llbracket k \rrbracket \in \{1,2,3\}$ the considered coordinate. 
Throughout this paper we will not consider any collision, then we introduce the set of body positions without collision:
\[
\mathcal{Q} := \{q \in \R^{3N} \ :\ \min_{\kappa \neq \nu}{\rm d}(\mathcal{S}_{\kappa}(q), \Omega^c \cup \mathcal{S}_{\nu}(q)) > 0\},
\]
where $d$ is the Euclidean distance. 
The fluid domain is then totally described by $q\in \mathcal{Q}$: $\mathcal{F}(t)= \mathcal{F}(q(t))$.
\par
The fluid vorticity 
 $ \omega$ is defined by $ \omega := \curl u =\partial_1 u_2- \partial_2 u_1 $
and we set $\gamma := (\gamma_\kappa )_{\kappa=1,\ldots,N} $,
where $\gamma_\kappa := \int_{\partial\mathcal S_{\kappa}} u \cdot\tau \, {\rm d}s$ is the circulation around the rigid body $\mathcal S_{\kappa}$. 
 Let us recall that, for each $\kappa$, the circulation $\gamma_\kappa$ remains constant over time according to Kelvin's theorem.
 \ \par
To state our result let us introduce a few more notations.
We denote by $S^{++}_{3N} (\mathbb{R})$ the set of real symmetric positive-definite  $3N\times 3N$ matrices and 
by 
$\mathcal{BL}_s  (\mathbb{R}^{3N} \times \mathbb{R}^{3N} ; \mathbb{R}^{3N} )$ the space of bilinear symmetric mappings from $ \mathbb{R}^{3N} \times \mathbb{R}^{3N} $ to $ \mathbb{R}^{3N}$.
\begin{Definition} \label{Christq}
Given a $C^{\infty}$ mapping 
$ q\in \mathcal Q \mapsto {\mathcal M}(q) \in S^{++}_3 (\mathbb{R})$, 
we say that the $C^{\infty}$ mapping
$ q\in \mathcal Q \mapsto \Gamma (q) \in \mathcal{BL}_s  (\mathbb{R}^{3N} \times \mathbb{R}^{3N} ; \mathbb{R}^{3N} )$ is the connection associated with this mapping if for any 
$ p \in\mathbb R^{3N} $,
\begin{equation*}
\langle\Gamma (q) ,p,p\rangle :=\big(\sum_{1\leqslant i,j\leqslant 3N} (\Gamma (q) )^k_{i,j} p_i p_j \big)_{1\leqslant k\leqslant 3N}\in\mathbb R^{3N} ,
\end{equation*}
with for every $i,j,k \in\{1,\ldots,3N\}$, 
\begin{equation*}
(\Gamma (q) )^k_{i,j} := \frac12
\Big( \frac{\partial ({\mathcal M} (q))_{i,k}}{\partial q_{j}} 
+ \frac{\partial ({\mathcal M} (q))_{j,k}}{\partial q_{i}} 
 - \frac{\partial ({\mathcal M} (q))_{i,j}}{\partial q_{k}} \Big) .
\end{equation*}
\end{Definition}
Let us emphasize that the coefficients $ \Gamma^{k}_{i,j}$ are the Christoffel symbols of the first kind associated with the metric ${\mathcal M} (q)$.
\ \par 
The main result of this paper is the following reformulation of the dynamics of the rigid bodies as a second-order differential equation for $q$ with respect to the fluid vorticity and the circulations around the bodies, but without 
any reference to the fluid velocity nor the pressure. 
\begin{theorem} \label{THEO-intro}
 There exist a $C^{\infty}$ mapping $ q\in \mathcal Q \mapsto \mathcal{M}(q) \in S^{++}_{3N} (\mathbb{R})$, which depends only on the shape of $\mathcal{F}(q)$, on the masses $(m_{\kappa})_{1,\dots,N}$ and on the moments of inertia $(\mathcal{J}_{\kappa})_{1,\dots,N}$, 
 and a $C^{\infty}$ mapping on $\mathcal Q $ which associates with $q$ in $\mathcal Q $ the mapping
 $F(q,\cdot)$ from $\R^{3N}\times C^\infty ( \overline{\mathcal{F}(q)}) \times \R^N$ to $\R^{3N}$ such that $F(q,\cdot ,0,0)=0$, 
which depends only on the shape of $\mathcal{F}(q)$, such that
if $u$ and $q$ are smooth functions which satisfy System~\eqref{SYS_full_system},  
then, up to the first collision, $q$ satisfies the second order ODE:
\begin{equation} 
	\label{solid-normal}
\mathcal{M}(q) q^{ \prime\prime} + \langle \Gamma (q),q^{\prime},q^{\prime}\rangle = F (q,q^{ \prime},\omega,\gamma) ,
\end{equation}
where $\Gamma$ is the connection associated with $q \mapsto \mathcal{M}(q)$.
\end{theorem}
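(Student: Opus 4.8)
The plan is to eliminate the pressure from Newton's equations by exploiting the linearity of the fluid system with respect to the boundary data and the vorticity, and to recognize the resulting structure as the geodesic equation of a metric plus a forcing term.

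First I would decompose the fluid velocity according to its boundary behaviour and its vorticity. At each fixed configuration $q\in\mathcal Q$ and for each $j\in\{1,\dots,3N\}$, introduce the elementary potential flow $\nabla\Phi_j(q,\cdot)$ solving the Neumann problem on $\mathcal F(q)$ with the boundary datum coming from the $j$-th degree of freedom of the rigid motion (i.e.\ $\partial_n\Phi_j$ equals the normal component of $e_{(j)}$-type rigid velocity on $\partial\mathcal S_{[j]}$, zero on the other boundaries), the harmonic field $\nabla\Psi_\kappa(q,\cdot)$ carrying unit circulation around $\mathcal S_\kappa$, and the vorticity-generated field $\nabla^\perp\psi[\omega]$ with $\psi$ the stream function solving $\Delta\psi=\omega$ with the appropriate boundary/circulation normalization. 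Then $u = \sum_j q'_j\,\nabla\Phi_j(q,x) + \sum_\kappa (\text{circulation-type coefficients})\,\nabla\Psi_\kappa + u_{\text{vort}}$, where the circulation-type coefficients are affine in $\gamma$ and in the $q'_j$ (because the $\Phi_j$ fields themselves carry nonzero circulations in general, which must be corrected by the $\Psi_\kappa$). All these elementary fields are $C^\infty$ in $q$ since they solve elliptic problems on a smoothly $q$-dependent domain.

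Next I would compute the pressure force. Taking the scalar product of the Euler equation with $n$ and integrating over $\partial\mathcal S_\kappa$ (and similarly with $(x-h_\kappa)^\perp\cdot n$ for the torque), one expresses $\int_{\partial\mathcal S_\kappa}\pi n\,ds$ in terms of the time derivative and spatial structure of $u$; the standard manoeuvre is to write $\pi$ via the pressure equation $-\Delta\pi = \operatorname{tr}((\nabla u)^2)$ with Neumann data $\partial_n\pi = -\partial_t u\cdot n - \ldots$ on the boundaries, then integrate against suitable harmonic test functions (the Kirchhoff potentials $\Phi_k$) to kill the Laplacian. Using the boundary condition~\eqref{souslab}, the term $\int \partial_t u\cdot n\,\Phi_k$ produces, after substituting the decomposition of $u$ and differentiating in $t$, exactly the combination $(\mathcal M_a(q)q')'$ with $\mathcal M_a(q)_{jk} = \int_{\mathcal F(q)}\nabla\Phi_j\cdot\nabla\Phi_k$ the added-mass matrix; adding the genuine inertia $\operatorname{diag}(m_\kappa,m_\kappa,\mathcal J_\kappa)$ gives $\mathcal M(q)\in S^{++}_{3N}$. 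The quadratic-in-$q'$ remainder, coming both from $\partial_t\Phi_j(q(t),x)=\nabla_q\Phi_j\cdot q'$ and from the $\operatorname{tr}((\nabla u)^2)$ and convective terms, reorganizes — this is the key algebraic identity — into $\langle\Gamma(q),q',q'\rangle$ with $\Gamma$ the Christoffel symbols of $\mathcal M$; the cleanest way to see this is to check that the pressure-free part of the system is, by construction, the Euler--Lagrange equation of the kinetic Lagrangian $\tfrac12 q'^t\mathcal M(q)q'$ when $\omega=\gamma=0$, which forces the second-order part to be the geodesic operator. Everything else — the terms linear and quadratic in $\gamma$ (the Kutta--Joukowski-type lift and the circulation--circulation interactions) and the terms involving $\omega$ (bilinear couplings of $u_{\text{vort}}$ with the $\Phi_j$ and $\Psi_\kappa$, plus the self-interaction $\int\operatorname{tr}((\nabla u_{\text{vort}})^2)$-type contributions) — is collected into $F(q,q',\omega,\gamma)$, which manifestly vanishes when $\omega=0$ and $\gamma=0$ and is $C^\infty$ in $q$ and smooth/polynomial in its other arguments because it is built from integrals of products of the elementary fields.

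The main obstacle I anticipate is the bookkeeping that turns the raw quadratic-in-$q'$ terms into precisely the Christoffel expression of Definition~\ref{Christq}: one must carefully account for the $q$-dependence of the fluid domain (hence of every $\Phi_j$) when differentiating $u(t,x)$ and the boundary integrals in time, using shape-derivative / transport formulas, and then match the symmetrized derivatives $\partial_{q_i}(\mathcal M)_{j,k}$ term by term. A convenient shortcut is to establish the identity first in the absence of vorticity and circulation by invoking the least-action principle (the pressure being the Lagrange multiplier of incompressibility, the solid-fluid system is genuinely Lagrangian there, as is classical for a single body), so that the geodesic structure comes for free, and then to treat $\omega$ and $\gamma$ as perturbations producing only the lower-order forcing $F$. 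Care is also needed with the harmonic fields $\Psi_\kappa$ and the genus of $\mathcal F(q)$ (it is multiply connected when $N\geq1$), to make sure the circulations are correctly prescribed and constant in time by Kelvin's theorem, and that the decomposition of $u$ is the Hodge-type decomposition adapted to this topology.
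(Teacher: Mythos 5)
Your proposal follows essentially the same route as the paper: decompose $u$ into the potential part driven by the body velocities, the harmonic circulation fields and the vorticity part; test Newton's equations against the rigid potential flows $u^\ast_\kappa=\nabla(\boldsymbol\varphi_{\kappa}\cdot p^\ast_{\kappa})$ built from the Kirchhoff potentials so as to eliminate the pressure; identify the geodesic operator in the purely potential contribution; and collect everything involving $\omega$ and $\gamma$ into $F$. Two sub-steps differ in execution rather than substance. First, you pass through the pressure Poisson problem $-\Delta\pi=\operatorname{tr}((\nabla u)^2)$ and integrate it against the harmonic $\varphi_k$, whereas the paper simply writes $\int_{\partial\mathcal F}\pi\,u^\ast\cdot n\,{\rm d}s=\int_{\mathcal F}\nabla\pi\cdot u^\ast\,{\rm d}x$ and substitutes $\nabla\pi=-(\partial_t u+\tfrac12\nabla|u|^2+\omega u^\perp)$ from \eqref{EEE1}; the two are equivalent after one more integration by parts. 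Second, for the identification of the quadratic-in-$q'$ terms with $\langle\Gamma(q),q',q'\rangle$ you invoke the least-action principle in the irrotational, circulation-free case; the paper instead quotes the direct computation of \cite{Munnier:2008ab} (Lemma~\ref{LEM_3} here), applied verbatim to the $u_1$-part of the decomposition in the general case. Your variational shortcut is legitimate, but you would still need to justify that the reduced Lagrangian is $\tfrac12 q'^{t}\mathcal M(q)q'$ and that the identity, once established for $\omega=0$, $\gamma=0$, applies unchanged to the $u_1$-terms when $u_2\neq 0$ (which is exactly what the decomposition is designed to guarantee). One small correction: since the Kirchhoff potentials are single-valued, their gradients have zero circulation around every body, so the coefficients of the circulation fields in your decomposition are exactly the constants $\gamma_\kappa$ and carry no $q'$-dependence; the correction you anticipate is unnecessary, and including it would only complicate the separation between the added-mass terms and the gyroscopic part of $F$.
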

In the case where the vorticity and the circulations vanish, then the right hand side of \eqref{solid-normal} also vanishes and 
the equation then reduces to the geodesic equation associated with the metric ${\mathcal M} (q) $. This case was already treated in \cite{Munnier:2008ab}. 
The matrix $\mathcal{M}(q)$ is associated with the added mass effect. A well-known formula with respect to the so-called Kirchhoff potentials is recalled in Section \ref{Subsec:inertiamatrices}.  
On the other hand some computations of the Christoffel symbols are given in \cite[Lemma 6.3]{Munnier:2008ab}. Some alternative formulas are given in Section~\ref{Subsec:prop}. 

When the vorticity and the circulations are nonzero the force $F$ in the right hand side of \eqref{solid-normal} has to be taken into account. 
In particular the determination of the motion of the bodies has then to be completed with 
 the evolution equation for  the fluid vorticity  $ \omega$ which is 
 \begin{equation}
  \label{transp-vort}
 \frac{\partial \omega }{\partial t}+(u\cdot\nabla) \omega =0 , \quad x \in \mathcal{F}(q(t)) .
\end{equation}

In Section~\ref{Subsec:explicit} we give a more explicit expression of this force $F$, see \eqref{def-F}, which displays 
 gyroscopic features generalizing the Kutta-Joukowski-type lift force and reads as a non-linear coupling between body-body interactions through the potential flows that their velocity induces, 
body-body interactions through the irrotational flows that their circulation induces, and body-vorticity interactions. 

This reformulation extends the works \cite{GLS} and \cite{GLS2} where the case of one rigid body in an unbounded rotational flow was addressed
and the work \cite{GMS} where the case of one rigid body in a bounded irrotational flow was tackled. 
 \par 
 Let us mention two motivations for the result above. The first one is the 
 zero-radius limit as considered in \cite{GLS}, \cite{GLS2}, \cite{GMS} in more simple settings.
Another one is related to the controllability of the motion of the rigid bodies from a part of the external boundary, see 
 \cite{GKS} for the case of a single rigid body. 
 \par
 Regarding  the issue of collisions  we refer to \cite{H}, \cite{HM} and the recent paper \cite{C}.
 
 \bigskip

 The remainder of this article is divided into four Sections. We first determine the fluid velocity $u$ in terms of $(q,q',\omega, \gamma)$ in Section~\ref{Subsec:decomp}.
In Section~\ref{Subsec:explicit}, we give the explicit formulas of $\mathcal{M}, \Gamma , F$ which appear in \eqref{solid-normal}. Section~\ref{sec:normal} is dedicated to the proof of Theorem~\ref{THEO-intro} whereas Section~\ref{Subsec:prop} contains the proof of an explicit formula concerning $\Gamma$. 
Finally Section \ref{Subsec:DC} is devoted to the proof of Proposition~\ref{lem-DC} which highlights a property of a part of the force $F$ due to the circulations around the body.
%
%
%
%
%
\section{Decomposition of the fluid velocity into elementary velocities}
\label{Subsec:decomp}
In this section, we determine the fluid velocity $u$ in terms of $(q,q',\omega, \gamma)$.
This material follows from well-known property of incompressible flows in multiply-connected domains. 
We refer in particular to Kato \cite{Kato} for more details. 
\ \par
\ \par
\noindent
{\it Kirchhoff potentials.}
Consider for any $ \kappa \in \{1,2, \ldots , N \}$, 
the functions $\xi_{ \kappa,j} (q,\cdot)=\xi_{k} (q,\cdot)$, for $j=1,2,3$ and $k=3(\kappa-1)+j$, defined by $\xi_{ \kappa,j} (q,x) :=0 $ on $\partial\mathcal{F}(q)\setminus \partial \mathcal{S}_{\kappa}$ and by $\xi_{ \kappa,j} (q,x) := e_{j}, \text{ for } j=1,2$, and $ \xi_{ \kappa,3} (q,x) := (x-h_\kappa)^\perp \text{ on }\partial\mathcal{S}_{\kappa}.$
Above $e_1$ and $e_2$ are the unit vectors of the canonical basis.
We denote by $K_{ \kappa,j} (q,\cdot)=K_{k} (q,\cdot)$ the normal trace of $\xi_{ \kappa,j} $ on $ \partial \mathcal{F}(q)$, that is:
$K_{ \kappa,j} (q,\cdot) := n \cdot \xi_{ \kappa,j} (q,\cdot) \text{ on } \partial\mathcal{F}(q)$,
where $n$ denotes the unit normal vector pointing outside ${\mathcal F}(q)$. We introduce the Kirchhoff potentials $\varphi_{ \kappa,j}(q,\cdot)=\varphi_{k}(q,\cdot)$, for $j=1,2,3$ and $k=3(\kappa-1)+j$, which are the unique (up to an additive constant) solutions in $\mathcal F(q)$ of the following Neumann problem:
\begin{subequations} \label{Kir}
\begin{alignat}{3} \label{Kir1}
\Delta \varphi_{ \kappa,j} &= 0 & \quad & \text{ in } \mathcal F(q),\\ \label{Kir2}
\frac{\partial \varphi_{ \kappa,j}}{\partial n} (q,\cdot)&= K_{\kappa,j} (q,\cdot) & \quad & \text{ on }\partial\mathcal{F}(q).
\end{alignat}
\end{subequations}
We also denote 
\begin{equation} \label{Kir-gras}
\boldsymbol{K}_\kappa(q,\cdot) :=(K_{\kappa,1}(q,\cdot),K_{\kappa,2}(q,\cdot),K_{\kappa,3}(q,\cdot))^{t} \text{ and }
\boldsymbol\varphi_\kappa(q,\cdot) :=(\varphi_{\kappa,1}(q,\cdot),\varphi_{\kappa,2}(q,\cdot),\varphi_{\kappa,3}(q,\cdot))^{t}.
\end{equation}
Following our rules of notation for $q$, we define as well the function $\varphi(q,\cdot)$ by concatenating 
into a vector of length $3N$ the functions $\boldsymbol\varphi_\kappa(q,\cdot)$, namely:
$\varphi(q,\cdot) :=(\boldsymbol\varphi_1(q,\cdot)^t,\ldots,\boldsymbol\varphi_{N}(q,\cdot)^t)^t.$
\ \par
\ \par
\noindent
{\it Stream functions for the circulation.} To take into account the circulations of the velocity around the solids, we introduce for each $\kappa \in \{ 1, \dots, N\}$ the stream function $\psi_{\kappa}= \psi_{\kappa}(q,\cdot)$ defined on $\mathcal F(q)$ of the harmonic vector field which has circulation $\delta_{\kappa,\nu}$ around $\partial \mathcal{S}_{\nu}(q)$. More precisely, for every $q$, there exists a unique family $(C_{\kappa,\nu}(q))_{\nu \in \{1,2, \ldots , N \}}\in\mathbb R^N$ such that the unique solution 
$\psi_{\kappa}(q,\cdot)$ of the Dirichlet problem:
\begin{subequations} 
\label{def_stream}
\begin{alignat}{3}
\Delta \psi_{\kappa}(q,\cdot) & =0 & \quad & \text{ in } \mathcal F(q) \\
\psi_{\kappa}(q,\cdot) & = C_{\kappa,\nu}(q) & \quad & \text{ on } \partial \mathcal S_{\nu}(q), \text{ for } \nu \in \{1,2, \ldots , N \} , \\
\psi_{\kappa}(q,\cdot) & =0 & & \text{ on } \partial\Omega,
\end{alignat}
satisfies
\begin{equation} \label{circ-norma}
\int_{\partial\mathcal S_{\nu}(q)} \frac{\partial \psi_{\kappa}}{\partial n} (q,\cdot) {\rm d}s=-\delta_{\kappa,\nu}, \text{ for } \nu \in \{1,2, \ldots , N \},
\end{equation}
\end{subequations}
where $ \delta_{\nu, \kappa}$ denotes the Kronecker symbol.
As before, we define the concatenation into a vector of length $N$:
$\psi(q,\cdot):=(\psi_1(q,\cdot),\ldots,\psi_N(q,\cdot))^t$.
\ \par
\ \par
\noindent
{\it Hydrodynamic stream function.} For every smooth scalar function $\omega$ over $\mathcal{F}(q)$, there exists a unique family $(C_{\omega,\nu}(q))_{\nu \in \{1,2, \ldots , N \}}\in\mathbb R^N$ such that the unique solution $\psi_{\omega}(q,\cdot)\in H^1(\mathcal{F}(q))$ of:
\begin{subequations} 
\label{def_hydro-stream}
\begin{alignat}{3}
\Delta \psi_{\omega}(q,\cdot) & =\omega & \quad & \text{ in } \mathcal F(q) \\
\psi_{\omega}(q,\cdot) & = C_{\omega,\nu}(q) & \quad & \text{ on } \partial \mathcal S_{\nu}(q), \text{ for } \nu \in \{1,2, \ldots , N \} , \\
\psi_{\omega}(q,\cdot) & =0 & & \text{ on } \partial\Omega,
\end{alignat}
satisfies
\begin{equation} \label{circ-hydro}
\int_{\partial\mathcal S_{\nu}(q)} \frac{\partial \psi_{\omega}}{\partial n} (q,\cdot) {\rm d}s=0, \text{ for } \nu \in \{1,2, \ldots , N \}.
\end{equation}
\end{subequations}
Then the decomposition of the fluid velocity $u$ is given by the following result. 
\begin{prop}
Let $q\in \mathcal{Q}$. For any $q'$ in $\R^{3N}$, for any 
$\omega$ smooth over $\mathcal{F}(q)$ and for any $\gamma$ in $\R^{N}$,
 there exists a unique smooth vector field $u$ such that 
\begin{gather*}
\div u =0 \text{ in } \mathcal{F} (q), \quad
\curl u =\omega \text{ in } \mathcal{F} (q), \quad u\cdot n =0 \text{ on } \partial\Omega, \\
u \cdot n = \big( \theta^{\prime}_\kappa (\cdot-h_\kappa)^\perp + h^{ \prime}_\kappa \big) \cdot n 
\text{ on }\partial\mathcal S_\kappa(t) 
\text{ and } \int_{\partial\mathcal S_\kappa (t)} u(t)\cdot\tau \, {\rm d}s = \gamma_\kappa , \quad  \text{ for all } \ \kappa  \in \{1,2, \ldots , N \}.
\end{gather*}
\par
Moreover, we have the decomposition
\begin{equation} \label{EQ_irrotational_flow}
u = u_1 (q,q',\cdot) + u_2 (q,\omega,\gamma,\cdot) , 
\end{equation}
with 
\begin{gather} \label{DecompU1}
u_1 (q,q',\cdot) := \sum_{\kappa=1}^N \nabla (\boldsymbol\varphi_{\kappa} (q,\cdot)\cdot \boldsymbol q'_{\kappa} )
= \sum_{\kappa=1}^N \nabla \left( \sum_{j=1}^3 \varphi_{\kappa,j} (q,\cdot)q'_{\kappa,j} \right)= \sum_{k=1}^{3N} q'_{k}\nabla \varphi_{k},
\\ \label{def-psi}
 u_2 (q,\omega,\gamma,\cdot) := \nabla^\perp\psi_{\omega,\gamma} (q,\cdot) , \text{ where }
\psi_{\omega,\gamma}(q,\cdot) :=\psi_{\omega}(q,\cdot) + \psi(q,\cdot)\cdot\gamma.
\end{gather}
\end{prop}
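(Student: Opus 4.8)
The plan is to prove the existence and uniqueness of $u$ first, and then to establish the decomposition \eqref{EQ_irrotational_flow}.

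\textbf{Uniqueness.} Suppose $u$ and $\tilde u$ both satisfy the stated conditions. Then $v := u - \tilde u$ is divergence-free and curl-free on $\mathcal{F}(q)$, satisfies $v\cdot n = 0$ on the whole boundary $\partial\mathcal{F}(q)$, and has zero circulation $\int_{\partial\mathcal S_\kappa(q)} v\cdot\tau\, {\rm d}s = 0$ around each $\mathcal{S}_\kappa(q)$. In a (possibly multiply-connected) smooth bounded domain, a vector field that is divergence-free, curl-free, tangent to the boundary and has vanishing circulations around all the holes is identically zero: indeed, since $\div v = 0$ and $v\cdot n = 0$, there is a single-valued stream function $\chi$ with $v = \nabla^\perp \chi$ (single-valued precisely because all the boundary fluxes vanish), which is constant on each connected component of $\partial\mathcal{F}(q)$; then $\curl v = 0$ gives $\Delta\chi = 0$, and an integration by parts together with the vanishing circulations (which control the constants $\chi|_{\partial\mathcal S_\kappa}$ up to one free global constant) forces $\nabla\chi \equiv 0$, hence $v \equiv 0$. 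This is exactly the classical Hodge/Kato uniqueness statement for which the reference to Kato \cite{Kato} is given.

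\textbf{Existence via the decomposition.} Rather than proving existence abstractly, I would directly exhibit $u$ through the formula \eqref{EQ_irrotational_flow}. Set $u_1(q,q',\cdot) := \sum_{k=1}^{3N} q'_k \nabla\varphi_k(q,\cdot)$ using the Kirchhoff potentials $\varphi_k$ from \eqref{Kir}, and $u_2(q,\omega,\gamma,\cdot) := \nabla^\perp\psi_{\omega,\gamma}(q,\cdot)$ with $\psi_{\omega,\gamma} = \psi_\omega + \psi(q,\cdot)\cdot\gamma$ from \eqref{def_hydro-stream} and \eqref{def_stream}. Then one checks each required property term by term: $\div u_1 = \sum_k q'_k \Delta\varphi_k = 0$ by \eqref{Kir1} and $\curl u_1 = 0$ since $u_1$ is a gradient; $\div u_2 = \div\nabla^\perp\psi_{\omega,\gamma} = 0$ automatically and $\curl u_2 = \curl\nabla^\perp\psi_{\omega,\gamma} = \Delta\psi_{\omega,\gamma} = \Delta\psi_\omega = \omega$ by \eqref{def_hydro-stream}, using that the $\psi_\kappa$ are harmonic. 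For the boundary conditions, on $\partial\Omega$ we have $u_1\cdot n = \sum_k q'_k \partial_n\varphi_k = \sum_k q'_k K_k = 0$ since $K_k$ vanishes on $\partial\Omega$ (the $\xi_{\kappa,j}$ are supported on $\partial\mathcal S_\kappa$), and $u_2\cdot n = \nabla^\perp\psi_{\omega,\gamma}\cdot n = \partial_\tau\psi_{\omega,\gamma} = 0$ because $\psi_{\omega,\gamma}$ is constant ($=0$) on $\partial\Omega$. On $\partial\mathcal S_\kappa(q)$, $u_2\cdot n = \partial_\tau\psi_{\omega,\gamma} = 0$ since $\psi_{\omega,\gamma}$ is constant on each $\partial\mathcal S_\nu(q)$, while $u_1\cdot n = \sum_j q'_{\kappa,j}K_{\kappa,j} = \big(\theta'_\kappa(\cdot - h_\kappa)^\perp + h'_\kappa\big)\cdot n$ by the definition of $\xi_{\kappa,j}$ and $K_{\kappa,j}$, which is precisely \eqref{souslab} (and the contributions of the other solids' potentials vanish there since $K_{\nu,j}$ is supported on $\partial\mathcal S_\nu$). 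Finally, for the circulations: $u_1$ being a single-valued gradient has zero circulation around every $\mathcal S_\nu$, the stream function part $\psi_\omega$ contributes $-\int_{\partial\mathcal S_\nu}\partial_n\psi_\omega\,{\rm d}s = 0$ by \eqref{circ-hydro}, and each $\psi_\kappa$ contributes $-\int_{\partial\mathcal S_\nu}\partial_n\psi_\kappa\,{\rm d}s = \delta_{\kappa,\nu}$ by \eqref{circ-norma}; carefully matching the sign convention relating $\int_{\partial\mathcal S_\nu} v\cdot\tau\,{\rm d}s$ to $-\int_{\partial\mathcal S_\nu}\partial_n\chi\,{\rm d}s$ for $v = \nabla^\perp\chi$, one gets $\int_{\partial\mathcal S_\nu} u_2\cdot\tau\,{\rm d}s = \sum_\kappa \gamma_\kappa\delta_{\kappa,\nu} = \gamma_\nu$ as required.

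\textbf{Smoothness.} The $C^\infty$ regularity of $u$ on $\overline{\mathcal{F}(q)}$ follows from elliptic regularity applied to the Neumann problems \eqref{Kir} and the Dirichlet problems \eqref{def_stream}, \eqref{def_hydro-stream}, given the smoothness of the boundary $\partial\mathcal{F}(q)$ and of the data $\omega$. The main technical point to get right — and the step I expect to require the most care — is the bookkeeping of orientation conventions: the normal $n$ points \emph{outside the fluid} (hence into the solids on $\partial\mathcal S_\kappa$), the tangent $\tau$ is counterclockwise on $\partial\mathcal S_\kappa$ but clockwise on $\partial\Omega$, and the normalization \eqref{circ-norma} carries a minus sign; threading these consistently through the identity $\nabla^\perp\chi\cdot n = \partial_\tau\chi$ and through the circulation computation is what makes the decomposition come out with the correct signs. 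Once existence of a field satisfying all the properties is established by this explicit construction, uniqueness from the first part shows that \eqref{EQ_irrotational_flow} is indeed \emph{the} decomposition, completing the proof.
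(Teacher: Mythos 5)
Your proof is correct and is precisely the classical argument the paper has in mind: the paper offers no written proof of this proposition, stating only that it ``follows from well-known property of incompressible flows in multiply-connected domains'' and citing Kato. Your uniqueness argument (single-valued stream function, constancy on boundary components, energy identity using the vanishing circulations) and your existence argument by direct term-by-term verification of the explicit decomposition --- including the careful tracking of the orientation and sign conventions in the circulation computation --- supply exactly the details the paper leaves to the reader.
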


Let us emphasize that when the 
fluid is irrotational the motion of the fluid is 
 completely determined by the motion of the bodies. 
 Otherwise the equation \eqref{transp-vort} has to be taken into account to determine the evolution of the fluid vorticity.

\section{Explicit formulas of the coefficients in the reformulation of the dynamics of the rigid bodies}
\label{Subsec:explicit}
 In this section we give some explicit expressions of the quantities $\mathcal{M}(q)$ and $F(q)$ which are involved in \eqref{solid-normal}. 
 These expressions give more insights on the various effects involved in the dynamics of the rigid bodies. 
 \subsection{Inertia matrices}
 \label{Subsec:inertiamatrices}
Let us gather the mass and moment of inertia of the solid $\kappa$ into the following matrix:
\begin{equation*} \label{DefMGkappa}
{\mathcal M}^g_{\kappa} := \begin{pmatrix}
	m_\kappa & 0 & 0 \\
	0 & m_\kappa & 0 \\
	0 & 0 & {\mathcal J}_\kappa
\end{pmatrix} .
\end{equation*}
The total genuine inertia matrix of the system is the $3N\times 3N$ bloc diagonal matrix:
\begin{equation}
\label{DefMG}
 \mathcal{M}^g:={\rm diag}( \mathcal{M}^g_1,\ldots, \mathcal{M}^g_N).
\end{equation}
which is diagonal and in the set $S^{++}_{3N} (\mathbb{R})$ of the real symmetric positive definite $3N\times 3N$ matrices.
In addition to their genuine inertia the rigid bodies have also to accelerate the surrounding fluid as they move through it. 
This is the phenomenon of added inertia, which is encoded by the matrices: 
\begin{equation}
 \mathcal{M}^{a}_{\kappa,\nu}(q):=\int_{\partial\mathcal S_\kappa(q)}\boldsymbol\varphi_\nu(q,\cdot)\otimes \frac{\partial\boldsymbol\varphi_\kappa}{\partial n}(q,\cdot)\,\, {\rm d}s.
\end{equation}
Integrating by parts, we also have
\begin{equation}
 \mathcal{M}^{a}_{\kappa,\nu}(q) =\int_{\mathcal  F (q)} \nabla \boldsymbol\varphi_\nu(q,\cdot)\otimes \nabla \boldsymbol\varphi_\kappa (q,\cdot)\,\, {\rm d}s=
\int_{\partial\mathcal S_\nu(q)}\boldsymbol\varphi_\kappa(q,\cdot)\otimes \frac{\partial\boldsymbol\varphi_\nu}{\partial n}(q,\cdot)\, {\rm d}s.
\end{equation}
The total added inertia matrix of the system reads: 
\begin{equation}
\label{DefMGa}
 \mathcal{M}^{a}(q) := \left( \mathcal{M}^{a}_{\kappa,\nu}(q)\right)_{1\leqslant \kappa, \nu\leqslant N}=\int_{\partial\mathcal F(q)}\varphi(q,\cdot)\otimes \frac{\partial\varphi}{\partial n}(q,\cdot)\, {\rm d}x.
\end{equation}
Notice that this matrix is in the set $S^{+}_{3N} (\mathbb{R})$ of the real symmetric positive-semidefinite $3N\times 3N$ matrices. These matrices depend on the shape of the fluid domain, therefore on the shape of the external boundaries and of the shape and position of the rigid bodies. 
\par
Finally, the overall inertia of the system is encoded in the 
inertia matrix:
\begin{equation}\label{def-M}
\mathcal{M}(q):=\mathcal{M}^g+\mathcal{M}^a(q) \in S^{++}_{3N} (\mathbb{R}).
\end{equation}
%
%
%
\subsection{Force term}
 \label{sec-marre}
From the stream function $\psi_{\omega,\gamma}$ associated with $\omega,\gamma$ \eqref{def-psi}, and the Kirchhoff potentials \eqref{Kir}-\eqref{Kir-gras}, we define:
\begin{align*} 
 \mathcal{A}_{\kappa,\nu}(q,\omega,\gamma):=& 
 \int_{\partial\mathcal S_{\kappa}(q)} \frac{\partial \psi_{\omega,\gamma}}{\partial n}(q,\cdot) \left( \frac{\partial\boldsymbol\varphi_{\kappa}}{\partial n} \otimes \frac{\partial\boldsymbol\varphi_{\nu}}{\partial \tau} \right) (q,\cdot) \, {\rm d}s\\
& - \int_{\partial\mathcal S_{\nu}(q)} \frac{\partial \psi_{\omega,\gamma}}{\partial n}(q,\cdot) \left( \frac{\partial\boldsymbol\varphi_{\kappa}}{\partial \tau} \otimes \frac{\partial\boldsymbol\varphi_{\nu}}{\partial n} \right) (q,\cdot) \, {\rm d}s , \\
\mathcal{A}(q,\omega,\gamma) :=& \Big( \mathcal{A}_{\kappa,\nu}(q,\omega,\gamma) \Big)_{1\leq \kappa,\nu\leq N} .
\end{align*}
We observe that the matrix $\mathcal{A}$ is skew-symmetric and linear with respect to $\omega$ and $\gamma$.
With this matrix we associate the gyroscopic force $\mathcal{A}(q,\omega,\gamma) q'$ which generalizes the celebrated Kutta-Joukowski force, see for instance \cite{SueurP}. 
\par
We also define:
\begin{gather*} 
E_{\kappa}(q,\omega,\gamma) := - \frac{1}{2} \int_{\partial\mathcal S_{\kappa} (q)} 
\left( \left| \frac{\partial\psi_{\omega,\gamma}}{\partial n} \right|^2 \frac{\partial\boldsymbol\varphi_{\kappa}}{\partial n} \right) (q,\cdot)\, {\rm d}s ,
\\ E(q,\omega,\gamma):=(E_{1}(q,\omega,\gamma) ,\ldots,E_{N}(q,\omega,\gamma))^t,
\\ D_k(q,q',\omega,\gamma):=-\int_{\mathcal F(q)}\omega u^\perp(q,q',\omega,\gamma,\cdot )\cdot\nabla \varphi_{k}(q,\cdot)\,{\rm d}x,\\
D(q,q',\omega,\gamma ):=(D_1(q,q',\omega,\gamma),\ldots,D_{3N}(q,q',\omega,\gamma))^t .
\end{gather*}
Finally we set 
\begin{equation}
 \label{def-F}
F (q,q',\omega,\gamma) := E(q,\omega,\gamma) + \mathcal{A}(q,\omega,\gamma) q' + D(q,q',\omega,\gamma ) .
\end{equation}
 It follows from the definitions and properties of the elementary flows given in the previous section that the mapping which associates with $q$ in $\mathcal Q $ the mapping
 $F(q,\cdot)$ from $\R^{3N}\times C^\infty (\overline{\mathcal{F}(q)}) \times \R^N$ to $\R^{3N}$ is $C^{\infty}$, such that $F(\cdot,\cdot ,0,0)=0$, 
and depends only on the shape of $\mathcal{F}(q)$.
\par
Moreover, without vorticity, we can compute $E$ in terms of the shape derivatives of the constants $C_{\kappa,\nu} (q)$ appearing in the definition of the stream functions for the circulation terms \eqref{def_stream}.
\begin{prop}\label{lem-DC}
For any $q$ in $\mathcal{Q}$ and for any $\gamma$ in $\R^{N}$, 
 \[
 E(q,0,\gamma)=\frac12 \nabla_q \Big(\sum_{m=1}^N\sum_{\nu=1}^N \gamma_{\kappa}\gamma_{\nu }C_{\kappa,\nu} (q) \Big) .
 \]
\end{prop}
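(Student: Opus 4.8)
\textbf{Proof proposal for Proposition \ref{lem-DC}.}

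The plan is to compute the shape derivative $\partial_{q_k} C_{\kappa,\nu}(q)$ explicitly and to match it with the boundary integral defining $E(q,0,\gamma)$. First I would record that, when $\omega = 0$, one has $\psi_{\omega,\gamma}(q,\cdot) = \psi(q,\cdot)\cdot\gamma = \sum_{\nu} \gamma_\nu \psi_\nu(q,\cdot)$, so that $E_\kappa(q,0,\gamma) = -\tfrac12 \sum_{\mu,\nu}\gamma_\mu\gamma_\nu \int_{\partial\mathcal S_\kappa(q)} \big(\partial_n\psi_\mu\big)\big(\partial_n\psi_\nu\big)\,\frac{\partial\boldsymbol\varphi_\kappa}{\partial n}\,{\rm d}s$. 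Since on $\partial\mathcal S_\kappa(q)$ the stream functions $\psi_\mu$ are constant (equal to $C_{\mu,\kappa}(q)$) and vanish on $\partial\Omega$, their tangential derivatives vanish on the whole boundary, hence $|\nabla\psi_\mu| = |\partial_n\psi_\mu|$ on $\partial\mathcal S_\kappa(q)$ and the three boundary vector fields $\frac{\partial\boldsymbol\varphi_\kappa}{\partial n}$ encode exactly the infinitesimal rigid displacements $\xi_{\kappa,j}\cdot n$. The key identity I would exploit is the classical one for potential flows: for any vector field $W$ with $\div W = \curl W = 0$ in $\mathcal F(q)$, tangent to $\partial\Omega$, the force/torque exerted on $\mathcal S_\kappa$ by the ``Bernoulli pressure'' $\tfrac12|W|^2$ is $-\tfrac12\int_{\partial\mathcal S_\kappa}|W|^2 n\,{\rm d}s$ (and its angular analogue), which is precisely $E_\kappa$ when $W = \nabla^\perp\psi_{\omega,\gamma}$.

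Next I would differentiate the defining relations \eqref{def_stream} with respect to a shape parameter $q_k$ (equivalently, use Hadamard's shape-derivative formula). Denote by $\psi_\nu'$ the shape derivative of $\psi_\nu$ at fixed Eulerian point; it is harmonic in $\mathcal F(q)$, vanishes on $\partial\Omega$, equals $C_{\mu,\nu}'(q) - (\partial_n\psi_\nu)\,(\zeta_k\cdot n)$ on $\partial\mathcal S_\mu(q)$ (where $\zeta_k$ is the velocity field of the boundary deformation associated with the $k$-th degree of freedom, i.e. the rigid field $\xi_{[k],(k)}$ on $\partial\mathcal S_{[k]}$ and $0$ elsewhere), and has zero net normal flux around each solid by differentiating \eqref{circ-norma}. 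Now consider the quadratic form $Q(q) := \sum_{\mu,\nu}\gamma_\mu\gamma_\nu C_{\mu,\nu}(q)$. Using $C_{\mu,\nu}(q) = \psi_\nu(q,x)$ for $x\in\partial\mathcal S_\mu(q)$ together with the Dirichlet integral identity $\int_{\mathcal F(q)}\nabla\psi_\mu\cdot\nabla\psi_\nu\,{\rm d}x = -\sum_\sigma C_{\mu,\sigma}\int_{\partial\mathcal S_\sigma}\partial_n\psi_\nu\,{\rm d}s = C_{\mu,\nu}$ (by \eqref{circ-norma} and the symmetry $C_{\mu,\nu} = C_{\nu,\mu}$, which itself follows from this same identity), one gets $Q(q) = \int_{\mathcal F(q)}|\nabla(\psi\cdot\gamma)|^2\,{\rm d}x$, the Dirichlet energy of the circulation flow. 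Then $\partial_{q_k}Q$ is computed by the standard formula for the derivative of a Dirichlet energy under domain perturbation: the interior term $2\int\nabla(\psi\cdot\gamma)\cdot\nabla(\psi\cdot\gamma)'\,{\rm d}x$ combines with the boundary term $\int_{\partial\mathcal S_{[k]}}|\nabla(\psi\cdot\gamma)|^2(\zeta_k\cdot n)\,{\rm d}s$; the interior term is handled by Green's formula and the boundary conditions for $(\psi\cdot\gamma)'$ (its Dirichlet data on each solid being a constant times $\gamma$, against which the zero-flux condition \eqref{circ-norma} kills the contribution), leaving only $\partial_{q_k}Q = -\int_{\partial\mathcal S_{[k]}}|\nabla(\psi\cdot\gamma)|^2(\xi_{[k],(k)}\cdot n)\,{\rm d}s = 2\,E_{[k]}(q,0,\gamma)_{(k)}$, which is the claimed identity componentwise.

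The main obstacle I anticipate is the careful bookkeeping of the shape derivatives: one must be scrupulous about whether derivatives are taken at fixed Eulerian or Lagrangian points, track the transport term $(\zeta_k\cdot\nabla)(\cdot)$ that appears in Hadamard's formula, and verify that all boundary contributions from $\partial\Omega$ vanish (they do, since $\Omega$ is fixed and both $\psi\cdot\gamma$ and its shape derivative vanish there). A secondary technical point is justifying that the constants $C_{\mu,\nu}$ and the flow $\psi$ are smooth in $q$ with the claimed regularity, but this is exactly the kind of statement guaranteed by the smooth dependence asserted in Section~\ref{Subsec:decomp}, so I would simply invoke it. Once the reduction $Q(q) = \int_{\mathcal F(q)}|\nabla(\psi\cdot\gamma)|^2$ is in place, the rest is the textbook computation of the variation of a Dirichlet energy, and the factor $\tfrac12$ in the statement matches the $\tfrac12$ in the definition of $E_\kappa$.
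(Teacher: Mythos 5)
Your overall strategy---rewrite $Q(q):=\sum_{\mu,\nu}\gamma_\mu\gamma_\nu C_{\mu,\nu}(q)$ as a Dirichlet energy and shape-differentiate it using the boundary condition satisfied by the shape derivative of $\psi\cdot\gamma$---is exactly the paper's (Lemmas \ref{eqqe} and \ref{express:christoff2} do this for each $C_{\mu,\nu}$ separately). But two of your intermediate steps are wrong, and your final formula comes out right only because the errors cancel. First, the Dirichlet-energy identity has the wrong sign: with $n$ the outward normal of $\mathcal F(q)$, Green's formula gives $\int_{\mathcal F(q)}\nabla\psi_\mu\cdot\nabla\psi_\nu\,{\rm d}x=\sum_\sigma C_{\mu,\sigma}\int_{\partial\mathcal S_\sigma(q)}\frac{\partial\psi_\nu}{\partial n}\,{\rm d}s=-C_{\mu,\nu}$ by \eqref{circ-norma}, so $Q(q)=-\int_{\mathcal F(q)}|\nabla(\psi\cdot\gamma)|^2\,{\rm d}x$, not $+$ (note $C_{\kappa,\kappa}<0$, so your identity cannot hold). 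Second, and more seriously, \eqref{circ-norma} is \emph{not} a zero-flux condition: the flux of $\psi_\nu$ through $\partial\mathcal S_\sigma$ is $-\delta_{\nu\sigma}$ (the zero-flux normalization is \eqref{circ-hydro}, which concerns $\psi_\omega$ and is irrelevant here since $\omega=0$). Consequently the constant Dirichlet data $\sum_\nu\gamma_\nu\,\partial_{q_k}C_{\nu,\sigma}$ of $(\psi\cdot\gamma)'$ does not integrate to zero against $\partial_n(\psi\cdot\gamma)$ over $\partial\mathcal S_\sigma$; summing over $\sigma$ it contributes exactly $-2\,\partial_{q_k}Q$ to the interior term, which you drop.

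Running the computation with both corrections, the argument still closes, but only through the self-referential step that the paper exploits in proving Lemma \ref{express:christoff2}: with $D:=\int_{\mathcal F(q)}|\nabla(\psi\cdot\gamma)|^2\,{\rm d}x=-Q$, Reynolds plus Green gives $\partial_{q_k}D=-2\,\partial_{q_k}Q-2\int_{\partial\mathcal S_{[k]}}|\partial_n(\psi\cdot\gamma)|^2(\xi_k\cdot n)\,{\rm d}s+\int_{\partial\mathcal S_{[k]}}|\nabla(\psi\cdot\gamma)|^2(\xi_k\cdot n)\,{\rm d}s$; since $\partial_{q_k}D=-\partial_{q_k}Q$ and $|\nabla(\psi\cdot\gamma)|=|\partial_n(\psi\cdot\gamma)|$ on $\partial\mathcal S_{[k]}$, the unknown reappears with coefficient $2$ and solving the resulting linear relation yields $\partial_{q_k}Q=-\int_{\partial\mathcal S_{[k]}}|\nabla(\psi\cdot\gamma)|^2(\xi_k\cdot n)\,{\rm d}s=2E_{[k]}(q,0,\gamma)_{(k)}$, as claimed. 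So the missing idea is precisely this ``solve for the derivative that reappears on the right-hand side'' step; as written, your proof discards a nonzero term and compensates with a sign error, so neither step survives scrutiny even though the endpoint is correct.
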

%
%
\section{Proof of the main result}
\label{sec:normal}
This section is devoted to the proof of Theorem~\ref{THEO-intro}.
For each integer ${\kappa}$ between $1$ and $N$, $q\in \mathcal Q$, $\ell_{\kappa}^\ast\in \R^2$ and $r_{k}^\ast \in \R$,  we consider the following potential vector field $\mathcal F(q)$: 
$u^\ast_{\kappa} :=\nabla (\boldsymbol\varphi_{\kappa} (q,\cdot)\cdot p^\ast_{\kappa})$ where $p^\ast_{\kappa}:=(\ell^{\ast t}_{\kappa},r^\ast_{\kappa})^t$.
By \eqref{Kir1}, 
\begin{equation} \label{bc*}
u^\ast_{\kappa} \cdot n = \delta_{\kappa ,\nu} (\ell^\ast_{\kappa} + r^\ast_{\kappa}(\cdot - h_{\kappa})^\perp) \cdot n \text{ on } \partial\mathcal S_{\nu} (q) \text{ and } u^\ast_{\kappa} \cdot n =0 \text{ on } \partial \Omega , 
\end{equation}
Combining with \eqref{EqTrans} and \eqref{EqRot}, we arrive at
\begin{equation}
\label{bern_0}
m_{\kappa} h_\kappa'' \cdot\ell^\ast_{\kappa}+\mathcal J_{\kappa} \theta_\kappa'' r^\ast_{\kappa} =
\int_{\partial \mathcal F(q)} \pi u^\ast_{\kappa} \cdot n \, {\rm d}s .
\end{equation}
Since, by \eqref{Kir2}, $u^\ast_{\kappa}$ is divergence-free in $\mathcal F(q)$, with an integration by parts we arrive at 
\begin{equation*}
m_{\kappa} h_\kappa'' \cdot\ell^\ast_{\kappa}+\mathcal J_{\kappa} \theta_\kappa'' r^\ast_{\kappa} =
\int_{\mathcal F(q)} \nabla \pi \cdot u^\ast_{\kappa} \, {\rm d}x .
\end{equation*}
Since, by \eqref{EEE1}, 
\begin{equation*}
\nabla \pi =-\left(\frac{\partial u}{\partial t}+\frac{1}{2}\nabla|u |^2 + \omega u^{\perp} \right) \quad \text{in }\mathcal F(q) ,
\end{equation*}
we deduce from \eqref{bern_0} that
\begin{equation*}
m_{\kappa} h_\kappa'' \cdot\ell^\ast_{\kappa}+\mathcal J_{\kappa} \theta_\kappa'' r^\ast_{\kappa}=
-\int_{\mathcal F(q)}\left(\frac{\partial u}{\partial t}+
\frac{1}{2}\nabla|u|^2 + \omega u^{\perp}
\right)\cdot u^\ast_{\kappa} \, {\rm d}x .
\end{equation*}
\par
Summing over all the ${\kappa}$ and using the decomposition \eqref{EQ_irrotational_flow}
therefore yields 
\begin{align*}
{\mathcal M}^g q^{ \prime\prime} \cdot p^\ast 
+ \int_{\mathcal F(q)} \Big( \frac{\partial u_1}{\partial t}+\frac{1}{2}\nabla|u_1|^2 \Big) \cdot u^\ast\, {\rm d}x
& = - \int_{\mathcal F(q)} \big( \frac{1}{2}\nabla|u_2|^2 \big) \cdot u^\ast\, {\rm d}x \\
& \quad - \int_{\mathcal F(q)}\big(\frac{\partial u_2}{\partial t} + \nabla (u_{1} \cdot u_{2} ) \big) \cdot u^\ast\, {\rm d}x \\
& \quad -\int_{\mathcal F(q)} \omega u^{\perp}\cdot u^\ast \, {\rm d}x ,
\end{align*}
for all $p^\ast \in\mathbb R^{3N}$, 
where
\begin{equation}
\label{defuast}
 u^\ast := \sum_{1 \leqslant \kappa \leqslant N} u^\ast_{\kappa}.
 \end{equation}
We also recall that ${\mathcal M}^g$ is defined in \eqref{DefMG}.
The lemma below is proved in \cite[Lemma 5.1]{Munnier:2008ab}.
\begin{lemma} \label{LEM_3}
For any smooth curve $q(t)$ in $\mathcal Q$ and every $p^\ast \in\mathbb R^{3N}$, the following identity holds:
\begin{equation*}
{\mathcal M}^g q^{ \prime\prime} \cdot p^\ast 
+ \int_{\mathcal F(q)} \left(\frac{\partial u_1}{\partial t}+\frac{1}{2}\nabla|u_1|^2\right)\cdot u^\ast\, {\rm d}x
= {\mathcal M}(q) q^{ \prime\prime} \cdot p^\ast 
+
\langle \Gamma(q),q',q'\rangle \cdot p^\ast ,
\end{equation*}
where ${\mathcal M}$ is given in \eqref{def-M} and 
$\Gamma (q) $ is the connection associated with $ q \mapsto \mathcal{M}(q)$.
\end{lemma}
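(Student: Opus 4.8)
The plan is to prove Lemma~\ref{LEM_3} by expanding the time-derivative and pressure-gradient terms explicitly using the decomposition $u_1 = \sum_k q'_k \nabla\varphi_k(q,\cdot)$ and the expression for $u^\ast = \sum_k p^\ast_k \nabla\varphi_k(q,\cdot)$, and then recognizing the resulting expression as $\mathcal{M}(q)q'' \cdot p^\ast + \langle\Gamma(q),q',q'\rangle\cdot p^\ast$. Since $u^\ast$ does not depend on time (only the potentials $\varphi_k$ depend on $q(t)$, but $p^\ast$ is fixed), everything reduces to integrals of gradients of harmonic functions against each other over $\mathcal{F}(q)$, and one uses integration by parts to move these to boundary integrals that build up the added-mass matrix $\mathcal{M}^a(q)$ and its shape derivatives.

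First I would compute $\partial_t u_1$. Writing $u_1 = \sum_k q'_k \nabla\varphi_k(q(t),\cdot)$, the chain rule gives $\partial_t u_1 = \sum_k q''_k \nabla\varphi_k + \sum_{k,l} q'_k q'_l \nabla(\partial_{q_l}\varphi_k)$. Pairing against $u^\ast = \sum_m p^\ast_m\nabla\varphi_m$ and integrating over $\mathcal{F}(q)$: the first group yields $\sum_{k,m} q''_k p^\ast_m \int_{\mathcal{F}(q)}\nabla\varphi_k\cdot\nabla\varphi_m\,{\rm d}x = \mathcal{M}^a(q)q'' \cdot p^\ast$ by the integration-by-parts formula for $\mathcal{M}^a$ recalled in \eqref{DefMGa}. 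Combined with the genuine-inertia term $\mathcal{M}^g q'' \cdot p^\ast$ already present on the left, this produces $\mathcal{M}(q)q'' \cdot p^\ast$. The second group, together with the $\frac12\int_{\mathcal{F}(q)}\nabla|u_1|^2\cdot u^\ast\,{\rm d}x$ term (where $\frac12\nabla|u_1|^2 = \sum_{k,l}q'_k q'_l\,(\nabla\varphi_k\cdot\nabla)\nabla\varphi_l$ using that each $\nabla\varphi_k$ is curl-free), must be shown to equal $\langle\Gamma(q),q',q'\rangle\cdot p^\ast = \sum_{k,l,m}\Gamma^m_{k,l}q'_k q'_l p^\ast_m$.

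The heart of the matter is therefore the identity $\sum_m p^\ast_m\int_{\mathcal{F}(q)}\big[\nabla(\partial_{q_l}\varphi_k) + (\nabla\varphi_k\cdot\nabla)\nabla\varphi_l\big]\cdot\nabla\varphi_m\,{\rm d}x = \sum_m\Gamma^m_{k,l}p^\ast_m$ after symmetrizing in $k,l$, where $\Gamma^m_{k,l}=\frac12(\partial_{q_l}\mathcal{M}^a_{m,k} + \partial_{q_k}\mathcal{M}^a_{m,l} - \partial_{q_m}\mathcal{M}^a_{k,l})$ (the genuine inertia $\mathcal{M}^g$ is constant, so only $\mathcal{M}^a$ contributes to the Christoffel symbols). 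To see this, one differentiates $\mathcal{M}^a_{m,k}(q)=\int_{\mathcal{F}(q)}\nabla\varphi_m\cdot\nabla\varphi_k\,{\rm d}x$ with respect to $q_l$: there are two contributions, one from differentiating the integrand ($\int\nabla(\partial_{q_l}\varphi_m)\cdot\nabla\varphi_k + \int\nabla\varphi_m\cdot\nabla(\partial_{q_l}\varphi_k)$) and one from the domain variation, which by the Reynolds transport formula contributes a boundary integral over $\partial\mathcal{F}(q)$ involving the normal velocity of the boundary under the deformation $\partial_{q_l}$. The boundary terms are precisely where the quadratic expression $(\nabla\varphi_k\cdot\nabla)\nabla\varphi_l$ comes from, via an integration by parts that converts $\int_{\mathcal{F}(q)}(\nabla\varphi_k\cdot\nabla)\nabla\varphi_l\cdot\nabla\varphi_m\,{\rm d}x$ into boundary integrals (using $\Delta\varphi_l=0$, $\Delta\varphi_m=0$, and the boundary conditions \eqref{Kir2}). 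Carefully matching the interior and boundary pieces, and using the known shape-derivative formulas for the Kirchhoff potentials (the functions $\partial_{q_l}\varphi_k$ solve explicit Neumann problems whose data involve the boundary deformation fields $\xi_l$), one arrives at the stated combination of derivatives of $\mathcal{M}^a$.

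The main obstacle I anticipate is the bookkeeping in this shape-derivative computation: one must correctly handle the domain-variation (Hadamard/Reynolds) terms, be careful that the deformation velocity of $\partial\mathcal{S}_\nu(q)$ under $\partial_{q_l}$ is exactly $\xi_l$ (so that normal components match $K_l = \partial_n\varphi_l$), and track the antisymmetrization that turns the naive sum into the Christoffel combination with its characteristic minus sign on $\partial_{q_m}\mathcal{M}^a_{k,l}$. Since the excerpt attributes this lemma to \cite[Lemma 5.1]{Munnier:2008ab}, I would either reproduce that argument or simply cite it; in a self-contained write-up I would present the shape-derivative identity for $\mathcal{M}^a$ as the key computational lemma and then assemble the pieces as above.
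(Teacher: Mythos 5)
The paper offers no proof of this lemma beyond citing \cite[Lemma 5.1]{Munnier:2008ab}, so there is nothing internal to compare against; judged on its own terms, your outline is the right direct computation and would work. The easy half is handled correctly: $\partial_t u_1=\sum_k q_k''\nabla\varphi_k+\sum_{k,l}q_k'q_l'\nabla(\partial_{q_l}\varphi_k)$, the $q''$ part pairs with $u^\ast$ to give $\mathcal M^a(q)q''\cdot p^\ast$ via $\mathcal M^a_{km}=\int_{\mathcal F(q)}\nabla\varphi_k\cdot\nabla\varphi_m\,{\rm d}x$, and $\tfrac12\nabla|u_1|^2=\sum_{k,l}q_k'q_l'(\nabla\varphi_k\cdot\nabla)\nabla\varphi_l$ is legitimate after symmetrization since the $\nabla\varphi_k$ are curl-free. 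The genuine content, which you correctly isolate but do not carry out, is the identity
\begin{equation*}
\sum_{k,l}q_k'q_l'\Big(\int_{\mathcal F(q)}\nabla(\partial_{q_l}\varphi_k)\cdot\nabla\varphi_m\,{\rm d}x+\tfrac12\int_{\partial\mathcal F(q)}(\nabla\varphi_k\cdot\nabla\varphi_l)\,\frac{\partial\varphi_m}{\partial n}\,{\rm d}s\Big)=\sum_{k,l}\Gamma^m_{k,l}\,q_k'q_l',
\end{equation*}
where the second integral is what $\tfrac12\nabla|u_1|^2$ becomes after Stokes. Expanding each $\partial_{q_l}\mathcal M^a_{m,k}$ by Reynolds transport as you propose, the Christoffel combination reduces to a relation between the ``antisymmetric parts'' of $\int\nabla(\partial_{q_l}\varphi_k)\cdot\nabla\varphi_m$ under exchange of $l$ and $m$ and the boundary terms $\int_{\partial\mathcal S_{[l]}}\nabla\varphi_k\cdot\nabla\varphi_m\,(\xi_l\cdot n)\,{\rm d}s$; verifying it requires the explicit Neumann data of the shape derivatives $\partial_{q_l}\varphi_k$, i.e.\ precisely the content of Lemma~\ref{lem-banane} in Section~\ref{Subsec:prop}. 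So your plan is complete in structure but defers its hardest step to a computation you only describe; to make it self-contained you would need to import that shape-derivative lemma and do the index bookkeeping. An alternative that avoids most of this bookkeeping is the Lagrangian packaging of \cite{Munnier:2008ab}: one identifies $\int_{\mathcal F(q)}(\partial_t u_1+\tfrac12\nabla|u_1|^2)\cdot u^\ast\,{\rm d}x$ with $\big(\tfrac{d}{dt}\partial_{q'}T^a-\partial_q T^a\big)\cdot p^\ast$ for $T^a=\tfrac12\mathcal M^a(q)q'\cdot q'$, after which the Christoffel form is the standard Euler--Lagrange identity and no shape derivative of individual potentials is needed.
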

\ \par 
Before  moving on we recall that the definitions of $E(q,\omega)$ and $\mathcal{A}(q,\omega)$ are given in the previous section. 
As $|u_2| = |u_{2}\cdot \tau |$ on the boundary, by an integration by parts, we have the following.
\begin{lemma} \label{LEM_easy}
For every $q\in\mathcal Q$ and every $p^\ast \in\mathbb R^{3N}$, the following identity holds:
\begin{equation*}
- \int_{\mathcal F(q)}\left(\frac{1}{2}\nabla|u_2|^2\right)\cdot u^\ast\, {\rm d}x= E(q,\omega)\cdot p^\ast ,
\end{equation*}
where $u^\ast$ is given by \eqref{defuast}.
\end{lemma}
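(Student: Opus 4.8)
\emph{Proof plan.} The identity is obtained by a single integration by parts. First I would note that $u^\ast=\sum_{\kappa}\nabla(\boldsymbol\varphi_{\kappa}(q,\cdot)\cdot p^\ast_{\kappa})$ is divergence-free in $\mathcal F(q)$, since each $\boldsymbol\varphi_{\kappa}(q,\cdot)$ is harmonic by \eqref{Kir1}. Hence $\frac12\nabla|u_2|^2\cdot u^\ast=\frac12\div(|u_2|^2u^\ast)$, and, $u_2$ and $u^\ast$ being smooth up to $\partial\mathcal F(q)$ (smoothness of $q$, of $\omega$ and of the boundary), the divergence theorem gives
\begin{equation*}
\int_{\mathcal F(q)}\frac12\nabla|u_2|^2\cdot u^\ast\,{\rm d}x=\frac12\int_{\partial\mathcal F(q)}|u_2|^2\,(u^\ast\cdot n)\,{\rm d}s .
\end{equation*}

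Next I would compute the normal trace $u^\ast\cdot n$ on $\partial\mathcal F(q)$. By \eqref{Kir2} one has $\frac{\partial\boldsymbol\varphi_{\kappa}}{\partial n}(q,\cdot)=\boldsymbol K_{\kappa}(q,\cdot)$, which by definition of $\boldsymbol K_{\kappa}$ vanishes on $\partial\mathcal F(q)\setminus\partial\mathcal S_{\kappa}(q)$; this is also recorded in \eqref{bc*}. Summing over $\kappa$, it follows that $u^\ast\cdot n=0$ on $\partial\Omega$ while $u^\ast\cdot n=\frac{\partial\boldsymbol\varphi_{\nu}}{\partial n}(q,\cdot)\cdot p^\ast_{\nu}$ on $\partial\mathcal S_{\nu}(q)$, so that the boundary integral above reduces to $\sum_{\nu=1}^N\int_{\partial\mathcal S_{\nu}(q)}|u_2|^2\big(\tfrac{\partial\boldsymbol\varphi_{\nu}}{\partial n}(q,\cdot)\cdot p^\ast_{\nu}\big)\,{\rm d}s$.

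Finally I would use that $\psi_{\omega,\gamma}(q,\cdot)=\psi_{\omega}(q,\cdot)+\psi(q,\cdot)\cdot\gamma$ is constant on each $\partial\mathcal S_{\nu}(q)$ by \eqref{def_stream} and \eqref{def_hydro-stream}; hence $\nabla\psi_{\omega,\gamma}$ is purely normal there, $u_2=\nabla^\perp\psi_{\omega,\gamma}$ is tangent to $\partial\mathcal S_{\nu}(q)$, and $|u_2|=|u_2\cdot\tau|=\big|\frac{\partial\psi_{\omega,\gamma}}{\partial n}\big|$ on $\partial\mathcal S_{\nu}(q)$. Substituting this and recognizing the definition of $E_{\nu}(q,\omega,\gamma)$ yields
\begin{equation*}
\int_{\mathcal F(q)}\frac12\nabla|u_2|^2\cdot u^\ast\,{\rm d}x=\frac12\sum_{\nu=1}^N\int_{\partial\mathcal S_{\nu}(q)}\Big|\frac{\partial\psi_{\omega,\gamma}}{\partial n}\Big|^2\Big(\frac{\partial\boldsymbol\varphi_{\nu}}{\partial n}\cdot p^\ast_{\nu}\Big)\,{\rm d}s=-\sum_{\nu=1}^N E_{\nu}(q,\omega,\gamma)\cdot p^\ast_{\nu}=-E(q,\omega,\gamma)\cdot p^\ast ,
\end{equation*}
and multiplying by $-1$ gives the claim. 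There is no genuine obstacle here: the only points deserving care are the legitimacy of the integration by parts (ensured by the smoothness of $q$, $\omega$ and $\partial\mathcal F(q)$) and the bookkeeping of which components of $\partial\mathcal F(q)$ carry a nonzero normal trace of $u^\ast$, together with the elementary observation that $u_2$ is tangent to the solids' boundaries.
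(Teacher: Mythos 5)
Your argument is correct and is precisely the one the paper intends: an integration by parts using $\div u^\ast=0$, the fact that $u^\ast\cdot n$ vanishes on $\partial\Omega$ and equals $\frac{\partial\boldsymbol\varphi_{\nu}}{\partial n}\cdot p^\ast_{\nu}$ on $\partial\mathcal S_{\nu}(q)$, and the observation that $|u_2|=|u_2\cdot\tau|=\big|\frac{\partial\psi_{\omega,\gamma}}{\partial n}\big|$ on the solid boundaries. The signs check out against the definition of $E_{\kappa}$, so nothing is missing.
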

On the other hand, we have the following result regarding the crossed term. 
\begin{lemma} \label{lem-F-A}
For any smooth curve $q(t)$ in $\mathcal Q$ and every $p^\ast \in\mathbb R^{3N}$, the following identity holds:
\begin{equation*}
-\int_{\mathcal F(q)}\big(\frac{\partial u_2}{\partial t} + \nabla (u_{1} \cdot u_{2} ) \big) \cdot u^\ast\, {\rm d}x= \left(\mathcal{A}(q,\omega,\gamma) q'\right)\cdot p^\ast ,
\end{equation*}
where $u^\ast$ is given by \eqref{defuast}.
\end{lemma}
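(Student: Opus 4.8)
The plan is to establish the identity in Lemma~\ref{lem-F-A} by transforming the left-hand side into a boundary integral over $\partial\mathcal S_\kappa(q)$ and then recognizing the structure of $\mathcal{A}(q,\omega,\gamma)$. First I would recall that $u^\ast=\sum_\kappa \nabla(\boldsymbol\varphi_\kappa(q,\cdot)\cdot p^\ast_\kappa)$ is irrotational and divergence-free on $\mathcal F(q)$, so each of the two integrands against $u^\ast$ can be integrated by parts: a term $\nabla g \cdot u^\ast$ becomes $\int_{\partial\mathcal F(q)} g\, u^\ast\cdot n\,{\rm d}s$, and for the $\partial_t u_2$ term one uses that $u_2=\nabla^\perp\psi_{\omega,\gamma}$ so that $\partial_t u_2$ is also divergence-free, but one must be careful because $\partial_t u_2$ is not curl-free (its curl is $\partial_t\omega$, which by the transport equation is $-(u\cdot\nabla)\omega$). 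The cleaner route is to write $\partial_t u_2 + \nabla(u_1\cdot u_2)$ and test against $u^\ast$, integrating by parts where possible, reducing everything to integrals over the solid boundaries (the $\partial\Omega$ contributions vanish since $u^\ast\cdot n=0$ and, when relevant, $\psi_{\omega,\gamma}=0$ there).

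The key steps, in order, would be: (1) use $u^\ast\cdot n = \sum_\kappa \delta_{\kappa,\nu}(\ell^\ast_\kappa + r^\ast_\kappa(\cdot-h_\kappa)^\perp)\cdot n$ on $\partial\mathcal S_\nu(q)$ together with the boundary values of $\boldsymbol\varphi$ and $\psi_{\omega,\gamma}$ to replace volume integrals by boundary integrals; (2) on $\partial\mathcal S_\kappa(q)$, decompose gradients into their normal and tangential parts, using $\nabla\varphi = \frac{\partial\varphi}{\partial n}n + \frac{\partial\varphi}{\partial\tau}\tau$, and similarly $u_2 = \nabla^\perp\psi_{\omega,\gamma}$ so that $u_2\cdot\tau = -\frac{\partial\psi_{\omega,\gamma}}{\partial n}$ and $u_2\cdot n = \frac{\partial\psi_{\omega,\gamma}}{\partial\tau}$; (3) handle the time derivative $\partial_t u_2$ by converting it, via a transport/Reynolds-type argument using $\frac{\partial\omega}{\partial t}+(u\cdot\nabla)\omega=0$ and the kinematic boundary condition \eqref{souslab}, into terms matching the structure of $\mathcal{A}_{\kappa,\nu}$, in particular producing the factor $q'$ through $\frac{\partial\psi_{\omega,\gamma}}{\partial n}$ paired with $\frac{\partial\boldsymbol\varphi_\nu}{\partial\tau}$ and the other way round; (4) collect the two surface integrals — one over $\partial\mathcal S_\kappa$ with $\frac{\partial\boldsymbol\varphi_\kappa}{\partial n}\otimes\frac{\partial\boldsymbol\varphi_\nu}{\partial\tau}$ and one over $\partial\mathcal S_\nu$ with $\frac{\partial\boldsymbol\varphi_\kappa}{\partial\tau}\otimes\frac{\partial\boldsymbol\varphi_\nu}{\partial n}$, weighted by $\frac{\partial\psi_{\omega,\gamma}}{\partial n}$ — and read off $(\mathcal{A}(q,\omega,\gamma)q')\cdot p^\ast$ after summing over $\kappa,\nu$ and contracting with $p^\ast$ and $q'$.

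The main obstacle, I expect, is step (3): correctly computing $\int_{\mathcal F(q)}\partial_t u_2 \cdot u^\ast\,{\rm d}x$, because this requires commuting the time derivative with the (moving) domain, keeping track of the boundary motion $\partial_t(\partial\mathcal S_\kappa)$ through $q'$, and using the vorticity transport equation to eliminate $\partial_t\omega$. Concretely one must justify that the "naive" differentiation under the integral sign, combined with the boundary flux terms from the moving interfaces, reproduces exactly the tangential-derivative structure of $\mathcal{A}$; the skew-symmetry of $\mathcal{A}$ noted in the text is a useful consistency check here, and also reflects that this term should be purely gyroscopic (doing no work, $(\mathcal{A}q')\cdot q' = 0$). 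A secondary technical point is ensuring the identity $|u_2| = |u_2\cdot\tau|$ on each solid boundary is used appropriately to discard normal components of $u_2$ on the boundary when they would otherwise obstruct the identification — this is where the harmonicity of $\psi_{\omega,\gamma}$ and its constancy on each $\partial\mathcal S_\nu(q)$ enters. Once the boundary reduction is carried out carefully, matching against the defining formula for $\mathcal{A}_{\kappa,\nu}$ is then a bookkeeping exercise.
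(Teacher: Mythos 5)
Your skeleton matches the paper's: integrate by parts to push everything onto the solid boundaries, use that $u_2\cdot n=\frac{\partial \psi_{\omega,\gamma}}{\partial\tau}=0$ on each $\partial\mathcal S_\nu(q)$ so that $u_1\cdot u_2=(u_1\cdot\tau)(u_2\cdot\tau)=-\sum_\nu\big(\frac{\partial\boldsymbol\varphi_\nu}{\partial\tau}\cdot\boldsymbol q'_\nu\big)\frac{\partial\psi_{\omega,\gamma}}{\partial n}$, and read off one of the two blocks of $\mathcal A$. But the step you flag as ``the main obstacle, I expect'' is precisely the content of the proof, and you leave it unresolved. The paper first integrates by parts at fixed $t$ (so no moving-domain subtlety arises at this stage), getting $-\int_{\mathcal F(q)}\frac{\partial u_2}{\partial t}\cdot u^\ast\,{\rm d}x=\sum_\nu\int_{\partial\mathcal S_\nu(q)}\frac{\partial}{\partial t}\big(\psi_{\omega,\gamma}(q,\cdot)\big)(u^\ast\cdot\tau)\,{\rm d}s$, and then computes $\frac{\partial}{\partial t}\psi_{\omega,\gamma}$ \emph{on} $\partial\mathcal S_\nu(q)$ (Lemma~\ref{lem37-GMS}) by differentiating in $t$ the constraint $\psi_{\omega,\gamma}(q(t),R(\theta_\nu)(X-h_{\nu,0})+h_\nu)=C_{\omega,\gamma,\nu}(q(t))$ along the rigid trajectory of a boundary point. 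Using that the tangential derivative of $\psi_{\omega,\gamma}$ vanishes there and that $w_\nu\cdot n=\frac{\partial\boldsymbol\varphi_\nu}{\partial n}\cdot\boldsymbol q'_\nu$, this yields $\frac{\partial}{\partial t}\psi_{\omega,\gamma}=-\frac{\partial\psi_{\omega,\gamma}}{\partial n}\big(\frac{\partial\boldsymbol\varphi_\nu}{\partial n}\cdot\boldsymbol q'_\nu\big)+(\text{terms constant in }x)$ on $\partial\mathcal S_\nu(q)$. That identity is the missing idea: it is what produces the $\frac{\partial\boldsymbol\varphi_\kappa}{\partial\tau}\otimes\frac{\partial\boldsymbol\varphi_\nu}{\partial n}$ block of $\mathcal A$, with the normal derivative of $\boldsymbol\varphi_\nu$ (not the tangential one) coming from the boundary kinematics and the tangential derivative of $\boldsymbol\varphi_\kappa$ coming from $u^\ast\cdot\tau$. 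Your description of ``producing the factor $q'$ through $\frac{\partial\psi_{\omega,\gamma}}{\partial n}$ paired with $\frac{\partial\boldsymbol\varphi_\nu}{\partial\tau}$ and the other way round'' names the answer but not the mechanism.

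A second, lesser point: you plan to ``use the vorticity transport equation to eliminate $\partial_t\omega$''. This is not needed. The $\partial_t\omega$-dependence enters $\frac{\partial}{\partial t}\psi_{\omega,\gamma}$ only through $\frac{\delta\psi_{\omega,\gamma}}{\delta\omega}\cdot\frac{\partial\omega}{\partial t}$, whose trace on each $\partial\mathcal S_\nu(q)$ equals the $x$-constant $\frac{\delta C_{\omega,\nu}}{\delta\omega}\cdot\frac{\partial\omega}{\partial t}$; together with the other constant term $\frac{\partial C_{\omega,\gamma,\nu}}{\partial q}\cdot q'$ it is annihilated in the boundary integral because $\int_{\partial\mathcal S_\nu(q)}u^\ast\cdot\tau\,{\rm d}s=0$, $u^\ast$ being the gradient of the single-valued potential $\varphi\cdot p^\ast$. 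So no appeal to $\frac{\partial\omega}{\partial t}+(u\cdot\nabla)\omega=0$ is required in this lemma; relying on it would add machinery without closing the actual gap above.
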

\begin{proof}
By an integration by parts,
\begin{equation}
 \label{BR3}
- \int_{\mathcal F(q)} \frac{\partial u_2}{\partial t} \cdot u^\ast\, {\rm d}x
= \sum_{\nu}\int_{\partial\mathcal S_{\nu}(q)} \frac{\partial }{\partial t} \left(\psi_{\omega,\gamma}(q,\cdot) \right) (u^\ast \cdot \tau)\, {\rm d}s .
\end{equation}
Now we use the following lemma where we set $C_{\omega,\gamma,\nu} := C_{\omega,\nu} + \sum_{\kappa=1}^N \gamma_{\kappa} C_{\kappa,\nu}$. 
%
%
\begin{lemma}\label{lem37-GMS}
For any smooth curve $q(t)$ in $\mathcal Q$,
 for any $\nu$,
on $\partial \mathcal{S}_{\nu}(q)$:
\begin{equation*}
 \frac{\partial }{\partial t} \left( \psi_{\omega,\gamma}(q,\cdot) \right) =- \frac{\partial \psi_{\omega,\gamma}}{\partial n}(q,\cdot) \left( \frac{\partial \boldsymbol\varphi_{\nu} }{\partial n}(q,\cdot) \cdot \boldsymbol q'_{\nu}\right)
 +\frac{\partial C_{\omega,\gamma,\nu}}{\partial q}(q)\cdot q'  
 +\frac{\delta C_{\omega,\nu}}{\delta \omega}(q) \cdot \frac{\partial\omega}{\partial t} .
\end{equation*}
\end{lemma}
Let us stress that $ \frac{\delta C_{\omega,\nu}}{\delta \omega}(q) \cdot \frac{\partial\omega}{\partial t}$ denotes  the 
 functional derivative of $C_{\omega,\nu} $ with respect to  $\omega$, so that the last term is constant on $\partial \mathcal{S}_{\nu}(q)$ as a function of $x$.
\begin{proof}[Proof of Lemma~\ref{lem37-GMS}]
 By Kelvin's theorem, we consider only solutions such that $\gamma'(t)=0$, and we start with the observation that 
\begin{equation}
\label{samsoul1}
 \frac{\partial}{\partial t} \left( \psi_{\omega,\gamma}(q,\cdot) \right) = \frac{\partial\psi_{\omega,\gamma} }{\partial q} \cdot q'      + 
 \frac{\delta\psi_{\omega,\gamma}}{\delta \omega}  (q,\cdot)  \cdot \frac{\partial\omega}{\partial t}      .
 \end{equation}
\par
By differentiating with respect to $t$ the identity:
$\psi_{\omega,\gamma} (q,R(\theta_{\nu})(X-h_{\nu,0})+h_{\nu})= C_{\omega,\gamma,\nu}(q)$, for $ X\in\partial\mathcal S_{\nu,0},$
and by setting $x = R(\theta_{\nu})(X-h_{\nu,0})+h_{\nu}$, 
we obtain for every $x\in\partial\mathcal S_{\nu}(q)$, 
\begin{equation}
\label{samsoul2}
\frac{\partial\psi_{\omega,\gamma}}{\partial q}(q,x)\cdot q' + \frac{\delta \psi_{\omega,\gamma}}{\delta \omega} (q,x) \cdot    \frac{\partial\omega}{\partial t} 
+ \nabla \psi_{\omega,\gamma} (q,x)\cdot w_{\nu} (q,x) = \frac{\partial C_{\omega,\gamma,\nu}}{\partial q}(q)\cdot q'    + 
\frac{\delta C_{\omega,\nu}}{\delta \omega}(q) \cdot \frac{\partial\omega}{\partial t} ,
\end{equation}
where $w_{\nu}(q, x)= \theta'_{\nu} (x-h_{\nu})^\perp+h'_{\nu}$.
Since $\psi_{\omega,\gamma} (q, \cdot)$ is constant on $\partial\mathcal S_{\nu}(q)$, its tangential derivative is zero. 
Besides, on $\partial\mathcal S_{\nu}(q)$ we have $w_{\nu} (q,\cdot) \cdot n= \frac{\partial\boldsymbol\varphi_{\nu}}{\partial n}(q,\cdot) \cdot \boldsymbol q'_{\nu}$.
Thus we get
\begin{equation}
\label{samsoul3}
\nabla \psi_{\omega,\gamma} (q,x)\cdot w_{\nu} (q,x)
= \frac{\partial\psi_{\omega,\gamma}}{\partial n}(q,x) \big( \frac{\partial\boldsymbol\varphi_{\nu}}{\partial n}(q,x)\cdot \boldsymbol q'_{\nu} \big) 
\end{equation}
for $ x\in\partial\mathcal S_{\nu}(q)$.
By combining \eqref{samsoul1}, \eqref{samsoul2} and \eqref{samsoul3}
 we conclude the proof of Lemma~\ref{lem37-GMS}.
\end{proof}
By \eqref{BR3} and Lemma~\ref{lem37-GMS}, we get
\begin{align}\label{dt-u2-ustar}
 - \int_{\mathcal F(q)} \frac{\partial u_2}{\partial t} \cdot u^\ast\, {\rm d}x
& = -\sum_{\kappa=1}^N \sum_{\nu=1}^N \left(\int_{\partial \mathcal S_{\nu}(q)} \frac{\partial \psi_{\omega,\gamma} }{\partial n} 
\left(\frac{\partial \boldsymbol\varphi_{\kappa} }{\partial \tau} \otimes \frac{\partial \boldsymbol\varphi_{\nu} }{\partial n}\right){\rm d}s\ \boldsymbol q'_{\nu} \right) 
\cdot p^{\ast}_{\kappa} .
\end{align}
Let us stress here that the last  two terms in the equality of Lemma~\ref{lem37-GMS} are constant so that their contribution in \eqref{BR3} vanish since $\int_{\partial \mathcal S_{\nu}(q)}  u^\ast \cdot \tau \, {\rm d}s=0$. 
\par
On the other hand, since $u^{\ast}$ is divergence-free, we have the Stokes formula:
\begin{equation*}
\int_{\mathcal F(q)} \nabla (u_{1} \cdot u_{2} ) \cdot u^\ast\, {\rm d}x = \int_{\partial \mathcal F (q)} (u_{1} \cdot u_{2} ) (u^\ast\cdot n)\, {\rm d}s .
\end{equation*}
By \eqref{DecompU1}-\eqref{def-psi}, we have on $\partial \mathcal S_{\kappa}(q)$, 
\[
u_{1} \cdot u_{2} = (u_{1}\cdot \tau)(u_{2}\cdot \tau)
=-\sum_{\nu=1}^N \frac{\partial \boldsymbol\varphi_{\nu} }{\partial \tau}\cdot \boldsymbol q'_{\nu} \frac{\partial \psi_{\omega,\gamma} }{\partial n} ,
\]
hence we obtain by the definition of $u^\ast$  that 
\begin{align}
-\int_{\mathcal F(q)} \nabla (u_{1} \cdot u_{2} ) \cdot u^\ast\, {\rm d}x
\label{nabla-u1u2-ustar}
&=\sum_{\kappa=1}^N \sum_{\nu=1}^N \left(\int_{\partial \mathcal S_{\kappa}(q)} \frac{\partial \psi_{\omega,\gamma} }{\partial n}\left(\frac{\partial \boldsymbol\varphi_{\kappa} }{\partial n} \otimes \frac{\partial \boldsymbol\varphi_{\nu} }{\partial \tau}\right)\, {\rm d}s \, \boldsymbol q'_{\nu} \right) \cdot p^{\ast}_{\kappa}.
\end{align}
\ \par
Gathering \eqref{dt-u2-ustar} and \eqref{nabla-u1u2-ustar}, we end the proof of Lemma~\ref{lem-F-A}.
\end{proof}
\ \par 
Combining Lemmas~\ref{LEM_3}, \ref{LEM_easy}, \ref{lem-F-A}, observing that 
\begin{equation*}
-\int_{\mathcal F(q)} \omega u^{\perp}\cdot u^\ast \, {\rm d}x = 
 D (q,q',\omega ) \cdot p^\ast
\end{equation*}
and taking \eqref{def-F} into account, 
 we arrive at \eqref{solid-normal} (since $p^\ast$ can be arbitrarily chosen in $\mathbb R^{3N}$).
This concludes the proof of Theorem~\ref{THEO-intro}. 
%
%
%
\section{Computation of the Christoffel symbols}
\label{Subsec:prop}
 %
This section is devoted to the proof of the following result regarding the computations of the Christoffel symbols.
\begin{prop} \label{Prop-L}
For every integers $i,j,k$ between $1$ and $3N$, 
\begin{align*} 
 \frac{\partial {\mathcal M}^{a}_{i,j}(q)}{\partial q_{k}} 
=& -\int_{\partial\mathcal S_{[k]}(q)} \Big[ (\xi_i - \nabla \varphi_i) \cdot (\xi_j - \nabla \varphi_j) -\xi_{i} \cdot \xi_{j} \Big]\, (\xi_k \cdot n) \, {\rm d}s. \\
\nonumber
&-\delta_{3,(i)}\delta_{[k],[i]}\delta_{\{1,2\},(k)}\int_{\partial\mathcal S_{[k]}( q)}\varphi_j (\xi_k\cdot \tau){\rm d}s-\delta_{3,(j)}\delta_{[k],[j]}\delta_{\{1,2\},(k)} \int_{\partial\mathcal S_{[k]}(q)}\varphi_i (\xi_k \cdot \tau){\rm d}s.
\end{align*}
\end{prop}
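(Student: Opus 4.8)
The plan is to differentiate the added inertia matrix $\mathcal{M}^a_{i,j}(q) = \int_{\mathcal{F}(q)} \nabla\varphi_i \cdot \nabla\varphi_j \, {\rm d}x$ with respect to $q_k$, treating carefully both the domain dependence and the dependence of the Kirchhoff potentials themselves on $q$. First I would recall the standard shape-derivative / transport-theorem machinery: moving the solid $\mathcal{S}_{[k]}$ by an infinitesimal rigid displacement associated with the coordinate $(k)$ generates a boundary velocity field $\xi_k$ on $\partial\mathcal{S}_{[k]}$ and the zero field on the other boundary components. Hence $\frac{\partial}{\partial q_k}\int_{\mathcal{F}(q)} f \, {\rm d}x = \int_{\mathcal{F}(q)} \frac{\partial f}{\partial q_k}\,{\rm d}x - \int_{\partial\mathcal{S}_{[k]}(q)} f \,(\xi_k\cdot n)\,{\rm d}s$ (the sign coming from $n$ pointing out of the fluid, i.e. into the solid). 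Applying this with $f = \nabla\varphi_i\cdot\nabla\varphi_j$ gives one boundary term $-\int_{\partial\mathcal{S}_{[k]}}(\nabla\varphi_i\cdot\nabla\varphi_j)(\xi_k\cdot n)\,{\rm d}s$ plus the bulk term $\int_{\mathcal{F}(q)} \nabla(\partial_{q_k}\varphi_i)\cdot\nabla\varphi_j + \nabla\varphi_i\cdot\nabla(\partial_{q_k}\varphi_j)\,{\rm d}x$.

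Next I would handle the bulk term by integration by parts, using $\Delta\varphi_j = 0$ in $\mathcal{F}(q)$, to rewrite $\int_{\mathcal{F}(q)}\nabla(\partial_{q_k}\varphi_i)\cdot\nabla\varphi_j\,{\rm d}x = \int_{\partial\mathcal{F}(q)} (\partial_{q_k}\varphi_i)\,\frac{\partial\varphi_j}{\partial n}\,{\rm d}s$, and similarly for the symmetric term. The key step is then to identify the normal trace of $\partial_{q_k}\varphi_i$ on each boundary component. Differentiating the Neumann boundary condition $\frac{\partial\varphi_i}{\partial n} = K_i = n\cdot\xi_i$ along the moving boundary — exactly as in the computation of $\partial_t\psi_{\omega,\gamma}$ in Lemma~\ref{lem37-GMS} — yields an expression for $\frac{\partial}{\partial n}(\partial_{q_k}\varphi_i)$ involving the curvature/tangential-derivative terms generated by the motion of $\partial\mathcal{S}_{[k]}$. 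Here one must separate the case $[i]=[k]$ (the potential $\varphi_i$ is "attached" to the moving solid, so the right-hand side $\xi_i$ itself depends on $q_k$, which is where the extra terms $\delta_{3,(i)}\delta_{[k],[i]}\delta_{\{1,2\},(k)}$ come from — only the angular Kirchhoff potential $\varphi_{i}$ with $(i)=3$ has a genuinely $h$-dependent boundary datum $(x-h_\kappa)^\perp$) from the case $[i]\neq[k]$ (where $\xi_i$ vanishes near $\partial\mathcal{S}_{[k]}$ and only the convective term survives). Assembling these traces, and using that $\frac{\partial\varphi_j}{\partial n}=\xi_j\cdot n$ while $\varphi_j$ restricted to the boundary pairs with the tangential derivatives, produces terms of the form $\int_{\partial\mathcal{S}_{[k]}}[\text{stuff involving }\nabla\varphi_i,\nabla\varphi_j,\xi_i,\xi_j](\xi_k\cdot n)\,{\rm d}s$ together with the advertised $\int\varphi_j(\xi_k\cdot\tau)\,{\rm d}s$ corrections.

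The final step is algebraic bookkeeping: combining the "domain-motion" boundary term $-\int(\nabla\varphi_i\cdot\nabla\varphi_j)(\xi_k\cdot n)$ with the "potential-variation" boundary terms and recognizing that the combination telescopes into $-\int_{\partial\mathcal{S}_{[k]}}\big[(\xi_i-\nabla\varphi_i)\cdot(\xi_j-\nabla\varphi_j) - \xi_i\cdot\xi_j\big](\xi_k\cdot n)\,{\rm d}s$. The natural way to see this is to expand $(\xi_i-\nabla\varphi_i)\cdot(\xi_j-\nabla\varphi_j) = \xi_i\cdot\xi_j - \xi_i\cdot\nabla\varphi_j - \nabla\varphi_i\cdot\xi_j + \nabla\varphi_i\cdot\nabla\varphi_j$; subtracting $\xi_i\cdot\xi_j$ leaves exactly $\nabla\varphi_i\cdot\nabla\varphi_j$ minus the two cross terms $\xi_i\cdot\nabla\varphi_j + \xi_j\cdot\nabla\varphi_i$, and on $\partial\mathcal{S}_{[k]}$ one splits each dot product into normal and tangential parts, using $\nabla\varphi_i\cdot n = \xi_i\cdot n$ there, so that the normal parts of the cross terms cancel against part of $\nabla\varphi_i\cdot\nabla\varphi_j$ and the tangential parts reproduce the integration-by-parts contributions. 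I expect the main obstacle to be precisely this last matching: keeping consistent sign conventions for $n$ versus $\tau=n^\perp$ rotated the right way, correctly propagating the shape-derivative sign, and above all isolating the anomalous terms that occur only when $(i)=3$ or $(j)=3$ and $[i]=[k]$ (resp. $[j]=[k]$) with $(k)\in\{1,2\}$, since those arise from the subtle fact that translating solid $[k]$ changes the center $h_{[k]}$ that appears inside its own rotational Kirchhoff datum $\xi_{k}=\xi_{[k],3}=(x-h_{[k]})^\perp$.
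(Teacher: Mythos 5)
Your overall strategy coincides with the paper's: shape-differentiate $\mathcal{M}^a_{ij}(q)=\int_{\mathcal{F}(q)}\nabla\varphi_i\cdot\nabla\varphi_j\,{\rm d}x$ via Reynolds' transport theorem, convert the bulk term to boundary integrals using harmonicity, compute the Neumann data of the shape derivatives of the Kirchhoff potentials by differentiating \eqref{Kir2} along the deformation (this is Lemma~\ref{lem-banane}), and finish with tangential integration by parts and the splitting $\nabla\varphi_i=(\partial_n\varphi_i)\,n+(\partial_\tau\varphi_i)\,\tau$ together with $\partial_n\varphi_i=\xi_i\cdot n$ (this is Lemma~\ref{express:christoff}).

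There is, however, a concrete sign error that would make your final assembly fail. With $n$ the outward normal of the fluid domain (the paper's convention), Reynolds' theorem gives the flux term $+\int_{\partial\mathcal{S}_{[k]}}f\,(\xi_k\cdot n)\,{\rm d}s$, not $-\int$: when $\xi_k\cdot n>0$ the boundary moves in the direction of $n$, i.e.\ away from the fluid, so the fluid domain grows there. (Test with $f=x_1$ and a translation at speed $e_1$: $\frac{d}{dt}\int_{\mathcal{F}}x_1\,{\rm d}x=-|\mathcal{S}_{[k]}|$, which equals $\int_{\partial\mathcal{S}_{[k]}}x_1 n_1\,{\rm d}s$ since $n$ is the inward normal of the solid.) Your parenthetical justification for the minus sign is therefore backwards. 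This is not cosmetic: the tangential integration by parts of the shape-derivative boundary data produces, among other terms, $-2\int_{\partial\mathcal{S}_{[k]}}\partial_\tau\varphi_i\,\partial_\tau\varphi_j\,(\xi_k\cdot n)\,{\rm d}s$, and only its combination with $+\int_{\partial\mathcal{S}_{[k]}}\nabla\varphi_i\cdot\nabla\varphi_j\,(\xi_k\cdot n)\,{\rm d}s$ yields $\partial_n\varphi_i(\xi_j\cdot n)+\partial_n\varphi_j(\xi_i\cdot n)-\nabla\varphi_i\cdot\nabla\varphi_j$ and hence the stated formula; with your sign the claimed telescoping misses by $2\int_{\partial\mathcal{S}_{[k]}}\nabla\varphi_i\cdot\nabla\varphi_j\,(\xi_k\cdot n)\,{\rm d}s$. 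Two secondary points: you integrate by parts so as to leave the Dirichlet trace of $\partial_{q_k}\varphi_i$ against $\partial_n\varphi_j$, whereas only the Neumann trace of the shape derivative is computable, so you must pair $\partial_n(\partial_{q_k}\varphi_i)$ with $\varphi_j$ instead; and your explanation of the $\delta_{3,(i)}$ corrections is not quite right --- the material derivative of the datum $(x-h_{[i]})^\perp\cdot n$ along the deformation actually vanishes (datum and normal co-move with the body), and the anomalous term $-\delta_{3,(i)}\,\ell^\ast\cdot\tau$ arises instead from the identity $\partial_\tau\bigl((x-h)^\perp\cdot n\bigr)=1-\mathcal{H}\,(x-h)^\perp\cdot\tau$ in the curvature bookkeeping.
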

Observe that the formula above does not involve the curvature of the fluid domain as opposed  to \cite[Lemma 6.3]{Munnier:2008ab}. 
\begin{proof}
Let $\kappa\in\{1,\ldots,N\}$, $p_\kappa^\ast=(\ell^{\ast t}_\kappa,r_\kappa^\ast)^t\in\mathbb R^3$, $q\in\mathcal Q$ and $\xi_\kappa^\ast$ be a smooth divergence-free vector field such that $\xi_\kappa^\ast(x)=r_\kappa^\ast(x-h_{\kappa})^\perp+\ell_\kappa^\ast$ in a neighborhood of $\mathcal S_{\kappa}(q)$ and $\xi_\kappa^\ast(x)=0$ elsewhere.
Define the diffeomorphism $T_\kappa(t,\cdot)$ (for $t$ small) as being the flow, corresponding to the Cauchy problem:
$$\frac{\partial T_\kappa}{\partial t}(t,x)=\xi^\ast_\kappa(T_\kappa(t,x)),\qquad T_\kappa(0,x)=x.$$
Notice that we have:
$$\frac{\partial T_\kappa}{\partial t}(0,x)=\xi_\kappa^\ast(x).$$
Define now the  sets $\mathcal F(t):=T_\kappa(t,\mathcal F(q))$, $\mathcal S_{\mu}(t):=T_\kappa(t,\mathcal S_{\mu}(q))$ for all $\mu=1,\ldots,N$ (noting that $\mathcal S_{\mu}$ is fixed for $\mu\neq \kappa$) and, for every $t$ small and every $i\in\{1,\ldots,3N\}$, and the Kirchhoff potentials $t\mapsto (\overline{\varphi}_i(t,\cdot))_{i=1,\dots,3N}$ on $\mathcal F(t)$, see \eqref{Kir}.

The shape derivative of the Kirchhoff potential $\varphi_i$ ($i=1,\ldots,3N$) at the point $q$ in the direction $p_\kappa^\ast=(\ell_\kappa^{\ast t},r_\kappa^\ast)^t$ ($\kappa=1,\ldots,N$), is then defined by
$$\left(\frac{\partial\varphi_i}{\partial \boldsymbol q_\kappa}\cdot p_\kappa^\ast\right)(q,\cdot):=\overline{\varphi}_i'(0,\cdot) .$$
\begin{lemma}
 \label{lem-banane}
For  $q\in\mathcal Q$ and $p_\kappa^\ast=(\ell^{\ast t}_\kappa,r_\kappa^\ast)^t\in\mathbb R^3$, 
$\left(\frac{\partial\varphi_i}{\partial \boldsymbol q_\kappa}\cdot p_\kappa^\ast\right)(q,\cdot)$
is harmonic in $\mathcal F(q)$, satisfies the following Neumann condition on $\partial\mathcal  S_{\kappa}(q)$:
\begin{equation} \label{def_BC}
\frac{\partial}{\partial n}\left(\frac{\partial\varphi_i}{\partial \boldsymbol q_\kappa}\cdot p_\kappa^\ast\right)(q,\cdot)=
\begin{cases}
\frac{\partial}{\partial\tau}\left[\left(\frac{\partial \varphi_i}{\partial\tau}
-(\xi_i\cdot \tau)\right)(\xi_\kappa^\ast\cdot n)\right]
-\delta_{3,(i)} \, \ell_\kappa^\ast\cdot \tau&\text{if }\kappa=[i],  \\[0.2cm]
\frac{\partial}{\partial\tau}\left[\frac{\partial \varphi_i}{\partial\tau}
(\xi_\kappa^\ast\cdot n)\right]
&\text{if }\kappa\neq [i],
\end{cases}
\end{equation}
where $\xi_\kappa^\ast=r_\kappa^\ast(\cdot-h_\kappa)^\perp+\ell_\kappa^\ast$,
and the following homogeneous Neumann condition on $\partial\mathcal F(q) \setminus \partial\mathcal S_{\kappa}(q)$: 
\begin{equation} \label{def_BC2}
\frac{\partial}{\partial n}\left(\frac{\partial\varphi_i}{\partial \boldsymbol q_\kappa}\cdot p_\kappa^\ast\right)(q,\cdot)= 0.
\end{equation}
\end{lemma}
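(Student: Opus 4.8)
The plan is to compute the shape derivative of $\varphi_i$ by differentiating the defining Neumann problem \eqref{Kir} along the flow $T_\kappa(t,\cdot)$, keeping track of how the Laplace equation and the boundary condition transform. First I would recall the standard material-derivative / shape-derivative calculus: writing $\overline{\varphi}_i(t,\cdot)$ for the Kirchhoff potential on $\mathcal F(t)$, its material derivative $\dot\varphi_i := \overline{\varphi}_i'(0,\cdot) + \xi_\kappa^\ast\cdot\nabla\varphi_i$ satisfies a transported version of \eqref{Kir}, and hence the shape derivative $\varphi_i' := \overline{\varphi}_i'(0,\cdot)$ is harmonic in $\mathcal F(q)$ (because differentiating $\Delta\overline{\varphi}_i = 0$ and using that $\xi_\kappa^\ast$ vanishes away from $\mathcal S_\kappa$ while $\varphi_i$ is already harmonic gives $\Delta\varphi_i' = 0$). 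This yields the harmonicity claim and, on the part of the boundary where $\xi_\kappa^\ast\cdot n = 0$ (namely $\partial\Omega$ and the other $\partial\mathcal S_\mu$, $\mu\neq\kappa$ fixed — though note $\partial\mathcal S_{[i]}$ may also move), the homogeneous Neumann condition \eqref{def_BC2}; one must be a little careful that \eqref{def_BC2} as stated concerns $\partial\mathcal F(q)\setminus\partial\mathcal S_\kappa(q)$, which is exactly where the flow is the identity in a neighbourhood, so the trace computation is clean there.

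The core of the work is the boundary condition \eqref{def_BC} on $\partial\mathcal S_\kappa(q)$, and here I would differentiate the relation $\frac{\partial\overline{\varphi}_i}{\partial n_t}(t,\cdot) = \overline{K}_i(t,\cdot)$ on $\partial\mathcal S_\kappa(t)$, where $n_t$ is the moving normal and $\overline{K}_i$ is the moving normal trace of $\xi_i$. The key identity I would invoke is the Hadamard-type formula for the shape derivative of a Neumann trace: for a harmonic function, $\partial_n(\varphi_i') = \div_\tau\big( (\xi_\kappa^\ast\cdot n)\nabla_\tau\varphi_i\big) + \partial_n(\text{r.h.s. transport terms})$; in two dimensions the tangential divergence of a scalar times the tangential gradient collapses to $\partial_\tau[(\xi_\kappa^\ast\cdot n)\,\partial_\tau\varphi_i]$, which is where the curvature-free expression comes from (the curvature terms that appear in dimension-general formulas and in \cite[Lemma 6.3]{Munnier:2008ab} cancel against curvature terms coming from differentiating $\overline{K}_i$ along the normal). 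The remaining terms come from differentiating the data $K_i = n\cdot\xi_i$: when $\kappa\neq[i]$ the field $\xi_i$ is frozen near $\mathcal S_\kappa$ so only $n$ moves, producing the $\partial_\tau[\frac{\partial\varphi_i}{\partial\tau}(\xi_\kappa^\ast\cdot n)]$ term after combining with a $-\partial_\tau[(\xi_i\cdot\tau)(\xi_\kappa^\ast\cdot n)]$ piece that vanishes since $\xi_i\cdot\tau = 0$ there; when $\kappa = [i]$ one additionally differentiates $\xi_i(x) = e_{(i)}$ or $(x-h_\kappa)^\perp$ itself along the flow, and for the rotational component $(i)=3$ this yields the extra translation-type contribution $-\ell_\kappa^\ast\cdot\tau$ (here the chain rule through $h_\kappa$ and the rotation matrix produces exactly $-\ell_\kappa^\ast$ as the time-derivative of $(x-h_\kappa)^\perp$ modulo the rigid part which is tangential), giving the first branch of \eqref{def_BC} with the $-(\xi_i\cdot\tau)$ correction retained because now $\xi_i\cdot\tau\neq0$ on $\partial\mathcal S_\kappa$.

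I would organize the write-up as: (1) set up material vs. shape derivatives and derive harmonicity; (2) treat the easy boundary $\partial\mathcal F(q)\setminus\partial\mathcal S_\kappa(q)$, getting \eqref{def_BC2}; (3) on $\partial\mathcal S_\kappa(q)$, differentiate $\partial_{n_t}\overline{\varphi}_i = \overline{K}_i$ using the transport theorem for the normal vector ($\dot n = -(\nabla_\tau(\xi_\kappa^\ast\cdot n))$ in the tangential frame, in 2d) and the fact that $\varphi_i$ and $\varphi_i'$ are harmonic to convert normal-second-derivatives into tangential ones, which is precisely the mechanism eliminating curvature; (4) compute the right-hand side contribution $\dot K_i$ splitting into the $\kappa=[i]$ and $\kappa\neq[i]$ cases as above; (5) collect terms to land on \eqref{def_BC}. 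The main obstacle I anticipate is step (3): getting the signs and the exact form of the Hadamard formula right in the multiply-connected 2d setting — in particular verifying that the curvature contributions genuinely cancel (this is the substance of the remark that the formula "does not involve the curvature") rather than being hidden — and correctly handling the subtlety that when $[i]\neq\kappa$ but $[i]$ is still a moving body index, one must confirm $\mathcal S_{[i]}$ is among the frozen solids (indeed the construction fixes all $\mathcal S_\mu$ with $\mu\neq\kappa$), so that $\varphi_i'$ really does satisfy a homogeneous condition there. A secondary care point is making rigorous the functional-analytic setting (that the shape derivative exists and is smooth in $q$), which I would either cite from the shape-derivative literature or from \cite{Munnier:2008ab}.
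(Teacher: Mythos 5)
Your plan follows essentially the same route as the paper: differentiate the transported Neumann problem $\partial_{n_t}\overline{\varphi}_i=\overline{K}_i$ along the rigid flow $T_\kappa(t,\cdot)$, get harmonicity and the homogeneous condition on the frozen boundaries for free, and on $\partial\mathcal S_\kappa(q)$ use harmonicity plus the tangential derivative of the original boundary condition to trade the normal--normal second derivative for tangential ones, whereupon the curvature terms cancel against $\partial_\tau(\xi_\kappa^\ast\cdot n)=r_\kappa^\ast-\mathcal H(\xi_\kappa^\ast\cdot\tau)$; the case split on $[i]$ versus $\kappa$ and the origin of the $-\ell_\kappa^\ast\cdot\tau$ term are identified correctly. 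One small caution: for the rigid deformation the correct material derivative of the normal is $\dot n=-r_\kappa^\ast\tau$ (the normal simply rotates with the body), not $-\partial_\tau(\xi_\kappa^\ast\cdot n)\,\tau$, which differs by a curvature term that would spoil the advertised cancellation if used as stated.
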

%
%
\begin{proof}
Differentiating with respect to $t$ at every point $x$ in $\mathcal F(q)$ the identity $-\Delta \overline{\varphi}_i(t,x)=0$ we get $-\Delta \overline{\varphi}'_i(0,x)=0$.
The boundary condition \eqref{Kir2} can be rewritten, after a change of variables, as:
\begin{equation} \label{eq:NC_1}
\nabla\overline{\varphi}_i(t,T_\kappa(t,x))\cdot n(t,T_\kappa(t,x))=\xi_i(t,T_\kappa(t,x))\cdot n(t,T_\kappa(t,x))\quad\text{ on }\partial \mathcal S_\kappa(q),
\end{equation}
where $n(t,\cdot)$ is the unit normal vector to $\partial\mathcal F(t)$ directed towards the exterior of $\mathcal F(t)$.
Notice now that $n(t,T_\kappa(t,\cdot))=R(tr_\kappa^\ast)n(0,\cdot)$ and hence that:
\begin{equation} \label{eq:NC_2}
\frac{\partial n}{\partial t}(0,\cdot)=-r_\kappa^\ast\tau(0,\cdot)\text{ on }\partial \mathcal S_\kappa(q).
\end{equation}
We deduce in particular that, when $[i]=\kappa$:
\begin{subequations} \label{eq:NC_3}
\begin{equation}
\frac{d}{dt}\left(\xi_i(t,T_\kappa(t,\cdot))\cdot n(t,T_\kappa(t,\cdot))\right)\big|_{t=0}=\begin{cases}-r_\kappa^\ast (e_{(i)}\cdot\tau) &\text{if }(i)=1,2,\\
0&\text{if }(i)=3,
\end{cases}
\end{equation}
because on $\mathcal{S}_{\kappa}$
\[
\frac{d}{dt}\Big((T_\kappa(t,x)-T_\kappa(t,h_{\kappa}))^\perp \cdot n(t,T_\kappa(t,\cdot)) \Big) \Big|_{t=0}
=0,
\]
and obviously, when $[i]\neq \kappa$:
\begin{equation}
\frac{d}{dt}\left(\xi_i(t,T_\kappa(t,\cdot))\cdot n(t,T_\kappa(t,\cdot))\right)\big|_{t=0}=0.
\end{equation}
\end{subequations}
Differentiating now \eqref{eq:NC_1} with respect to $t$ and using \eqref{eq:NC_2} and \eqref{eq:NC_3} we obtain:
\begin{equation} \label{eq:NC_main}
\frac{\partial \overline\varphi_i'}{\partial n}(0,\cdot)+D^2 \overline\varphi_i(0,\cdot) n\cdot \xi_\kappa^\ast-r_\kappa^\ast\frac{\partial \overline\varphi_i}{\partial\tau}(0,\cdot)=
\begin{cases}
0 & \text{if } \kappa \neq [i]\\
-r_\kappa^\ast (e_{(i)}\cdot\tau) & \text{if } \kappa=[i],\,(i)=1,2, \\
0 & \text{if } \kappa=[i],\,(i)=3.
\end{cases}
\end{equation}
where $D^2\overline\varphi_i(0,\cdot)$ is the Hessian matrix of $\overline\varphi_i(0,\cdot)$.
On the one hand, let us now decompose $\xi_\kappa^\ast$ into 
\begin{equation} \label{eq:NC_4}
\xi_\kappa^\ast=(\xi_\kappa^\ast\cdot n)n+(\xi_\kappa^\ast\cdot \tau)\tau\quad\text{ on }\partial\mathcal S_\kappa(q).
\end{equation}
On the other hand, computing the tangential derivative of the boundary condition \eqref{Kir2} on $\partial\mathcal S_\kappa(q)$, we get:
\begin{equation} \label{eq:NC_5}
D^2 \overline\varphi_i(0,\cdot)n\cdot\tau=\mathcal H\frac{\partial \overline\varphi_i}{\partial\tau}(0,\cdot)+
\begin{cases}
0 & \text{if } \kappa \neq[i] \\
-\mathcal H (e_{(i)}\cdot\tau) & \text{if } \kappa=[i], \, (i)=1,2, \\
1-\mathcal H(\cdot-h_{\kappa})^\perp \cdot \tau & \text{if } \kappa=[i],\,(i)=3,
\end{cases}
\end{equation}
where $\mathcal H$ is the local curvature of $\partial\mathcal F(q)$ (defined by $\frac{\partial n}{\partial\tau}=-\mathcal H\tau$ on $\partial\mathcal F(q)$).
On $\partial\mathcal F(q)$, we also have:
$$
D^2 \overline\varphi_i(0,\cdot)n\cdot n-\mathcal H\frac{\partial \overline\varphi_i}{\partial n}(0,\cdot)+
\frac{\partial}{\partial\tau}\left(\frac{\partial\overline\varphi_i}{\partial\tau}\right)(0,\cdot) = \Delta \overline\varphi_i(0,\cdot)=0,
$$
whence we deduce that:
\begin{equation} \label{eq:NC_6}
D^2 \overline\varphi_i(0,\cdot)n\cdot n = \mathcal H\frac{\partial \overline\varphi_i}{\partial n}(0,\cdot)
- \frac{\partial}{\partial\tau}\left(\frac{\partial\overline\varphi_i}{\partial\tau}\right)(0,\cdot).
\end{equation}
Plugging the decomposition \eqref{eq:NC_4} into \eqref{eq:NC_main} and using the equalities \eqref{Kir2}, \eqref{eq:NC_5} and \eqref{eq:NC_6}, we get:
\begin{multline*}
\frac{\partial \overline\varphi_i'}{\partial n}(0,\cdot)=\left[(\xi_\kappa^\ast\cdot n) \frac{\partial}{\partial\tau} \left( \frac{\partial\overline\varphi_i}{\partial\tau} \right)(0,\cdot)
+(r_\kappa^\ast-\mathcal H(\xi_\kappa^\ast\cdot\tau)) \frac{\partial \overline\varphi_i}{\partial \tau}(0,\cdot)\right] \\
+
\begin{cases}
0 & \text{if } \kappa \neq [i] \\
(-r_\kappa^\ast +\mathcal H (\xi_\kappa^\ast\cdot\tau))(e_{(i)}\cdot\tau)-\mathcal H(e_{(i)}\cdot n)(\xi_\kappa^\ast\cdot n) 
& \text{if } \kappa=[i],\, (i)=1,2, \\
(-r_\kappa^\ast +\mathcal H (\xi_\kappa^\ast\cdot\tau))(\cdot-h_{k})^\perp\cdot\tau -\mathcal H(\cdot-h_{k})^\perp\cdot n (\xi_\kappa^\ast\cdot n)
- \ell_\kappa^\ast\cdot\tau & \text{if } \kappa=[i], \, (i)=3.
\end{cases}
\end{multline*}
Observe now that:
$$\frac{\partial}{\partial\tau}(\xi_\kappa^\ast\cdot n)=r_\kappa^\ast-\mathcal H(\xi_\kappa^\ast\cdot\tau)\quad\text{ on }\partial\mathcal S_\kappa(q),$$
and the result follows.
\end{proof}
%
%
With the same notation as above, we can now compute the shape derivatives of the entries of the added mass matrix \eqref{DefMGa}. 
\begin{lemma}\label{express:christoff}
The following identity holds true for every $i,j \in \{1,\ldots,3N\}$, $\kappa \in \{1,\ldots,N\}$:
\begin{align*} 
\left(\frac{\partial \mathcal M^a_{ij}}{\partial \boldsymbol q_\kappa}\cdot p_\kappa^\ast\right)(q)
= &- \int_{\partial\mathcal S_{\kappa}(q)} \big( \nabla\varphi_i \cdot \nabla\varphi_j 
- \nabla\varphi_i \cdot \xi_{j} - \nabla\varphi_j \cdot \xi_{i} \big) \, (\xi_\kappa^\ast\cdot n) \,{\rm d}s \\
&- \delta_{3,(i)} \delta_{\kappa,[i]} \int_{\partial\mathcal S_\kappa(q)}\varphi_j (\ell_{\kappa}^\ast\cdot \tau){\rm d}s
- \delta_{3,(j)} \delta_{\kappa,[j]} \int_{\partial\mathcal S_\kappa(q)}\varphi_i (\ell_{\kappa}^\ast\cdot \tau){\rm d}s.
\end{align*}
where we recall that $p_\kappa^\ast=(\ell_\kappa^{\ast t},r_\kappa^\ast)^t$ and $\xi_\kappa^\ast=r_\kappa^\ast(\cdot-h_\kappa)^\perp+\ell_\kappa^\ast$.
\end{lemma}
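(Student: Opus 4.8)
The starting point is the interior form of the added-mass entries obtained from \eqref{DefMGa} by the integration by parts recorded there, namely $\mathcal{M}^a_{ij}(q)=\int_{\mathcal F(q)}\nabla\varphi_i(q,\cdot)\cdot\nabla\varphi_j(q,\cdot)\,{\rm d}x$. The plan is to differentiate this expression at $t=0$ along the flow $t\mapsto T_\kappa(t,\cdot)$ constructed above, whose deformation velocity at $t=0$ is $\xi_\kappa^\ast$, so that the resulting derivative is precisely $\left(\frac{\partial \mathcal M^a_{ij}}{\partial \boldsymbol q_\kappa}\cdot p_\kappa^\ast\right)(q)$. Applying the Reynolds transport formula on the moving domain $\mathcal F(t)=T_\kappa(t,\mathcal F(q))$, and keeping in mind that $n$ points out of $\mathcal F(q)$, hence into $\mathcal S_\kappa(q)$, one gets
\[
\Big(\frac{\partial\mathcal M^a_{ij}}{\partial\boldsymbol q_\kappa}\cdot p_\kappa^\ast\Big)(q)
=\int_{\mathcal F(q)}\big(\nabla\overline\varphi_i'(0,\cdot)\cdot\nabla\varphi_j+\nabla\varphi_i\cdot\nabla\overline\varphi_j'(0,\cdot)\big)\,{\rm d}x
+\int_{\partial\mathcal S_\kappa(q)}(\nabla\varphi_i\cdot\nabla\varphi_j)\,(\xi_\kappa^\ast\cdot n)\,{\rm d}s,
\]
the boundary integral being reduced to $\partial\mathcal S_\kappa(q)$ because $\xi_\kappa^\ast$ vanishes away from a neighbourhood of $\mathcal S_\kappa(q)$.

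Next I would treat the two volume terms symmetrically. Since $\overline\varphi_i'(0,\cdot)$ and $\varphi_j$ are both harmonic in $\mathcal F(q)$ (Lemma~\ref{lem-banane} for the former), an integration by parts gives $\int_{\mathcal F(q)}\nabla\overline\varphi_i'(0,\cdot)\cdot\nabla\varphi_j\,{\rm d}x=\int_{\partial\mathcal F(q)}\varphi_j\,\frac{\partial\overline\varphi_i'}{\partial n}(0,\cdot)\,{\rm d}s$, which by \eqref{def_BC2} is supported on $\partial\mathcal S_\kappa(q)$; there I would insert the Neumann datum \eqref{def_BC}. Integrating the leading term of \eqref{def_BC} once more by parts, this time in the tangential variable along the closed curve $\partial\mathcal S_\kappa(q)$ (so that no endpoint contributions arise), and noting that $\xi_i\cdot\tau\equiv0$ on $\partial\mathcal S_\kappa(q)$ when $\kappa\neq[i]$, which allows both branches of \eqref{def_BC} to be written at once, I expect to reach
\[
\int_{\mathcal F(q)}\nabla\overline\varphi_i'(0,\cdot)\cdot\nabla\varphi_j\,{\rm d}x
=-\int_{\partial\mathcal S_\kappa(q)}\frac{\partial\varphi_j}{\partial\tau}\Big(\frac{\partial\varphi_i}{\partial\tau}-\xi_i\cdot\tau\Big)(\xi_\kappa^\ast\cdot n)\,{\rm d}s
-\delta_{3,(i)}\delta_{\kappa,[i]}\int_{\partial\mathcal S_\kappa(q)}\varphi_j\,(\ell_\kappa^\ast\cdot\tau)\,{\rm d}s,
\]
together with the symmetric identity obtained by exchanging $i$ and $j$.

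The final step is to add the three contributions and simplify the resulting integrand on $\partial\mathcal S_\kappa(q)$. Using the orthogonal splitting $\nabla\varphi_i=(\partial_n\varphi_i)\,n+(\partial_\tau\varphi_i)\,\tau$ and the boundary identity $\partial_n\varphi_i=\xi_i\cdot n$ coming from \eqref{Kir2}, one rewrites $\nabla\varphi_i\cdot\nabla\varphi_j$, $\nabla\varphi_i\cdot\xi_j$ and $\nabla\varphi_j\cdot\xi_i$ in terms of $(\xi_i\cdot n)(\xi_j\cdot n)$, $(\partial_\tau\varphi_i)(\partial_\tau\varphi_j)$ and the tangential cross terms, and a one-line algebraic check then yields, on $\partial\mathcal S_\kappa(q)$,
\[
\nabla\varphi_i\cdot\nabla\varphi_j-\frac{\partial\varphi_j}{\partial\tau}\Big(\frac{\partial\varphi_i}{\partial\tau}-\xi_i\cdot\tau\Big)-\frac{\partial\varphi_i}{\partial\tau}\Big(\frac{\partial\varphi_j}{\partial\tau}-\xi_j\cdot\tau\Big)
=-\big(\nabla\varphi_i\cdot\nabla\varphi_j-\nabla\varphi_i\cdot\xi_j-\nabla\varphi_j\cdot\xi_i\big),
\]
which is exactly the integrand of the claimed formula; the two $\ell_\kappa^\ast\cdot\tau$ integrals are the affine terms of \eqref{def_BC}, carried through unchanged.

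The two integrations by parts and the final algebra are routine. The points I anticipate as the main obstacle are the orientation and sign bookkeeping in the transport formula and the tangential integration by parts, and, above all, checking that the local curvature $\mathcal H$ which appears throughout the derivation of \eqref{def_BC} in Lemma~\ref{lem-banane} has genuinely cancelled out: this is exactly the reason the final identity, unlike \cite[Lemma~6.3]{Munnier:2008ab}, contains no curvature term, and it deserves to be verified carefully rather than taken for granted. The statement of Proposition~\ref{Prop-L} then follows by specializing $p_\kappa^\ast$ to the coordinate vectors and relabelling indices.
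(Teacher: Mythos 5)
Your proposal is correct and follows essentially the same route as the paper: Reynolds transport on the volume form of $\mathcal M^a_{ij}$, integration by parts against the harmonic shape derivatives with the Neumann data of Lemma~\ref{lem-banane}, a tangential integration by parts along the closed curve $\partial\mathcal S_\kappa(q)$, and the final algebraic simplification via $\partial_n\varphi_i=\xi_i\cdot n$. Your observation that the two branches of \eqref{def_BC} merge because $\xi_i$ vanishes on $\partial\mathcal S_\kappa(q)$ when $\kappa\neq[i]$, and that the curvature terms have already cancelled inside Lemma~\ref{lem-banane}, are both accurate.
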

%
\begin{proof}
The quantity we are interested in is the derivative at $t=0$ of:
$$\mathcal M^a_{ij}(t):=\int_{\mathcal F(t)}\nabla\overline{\varphi}_i\cdot \nabla\overline{\varphi}_j\, {\rm d}x.$$
The proof relies on Reynold's transport theorem which reads:
$$
\frac{d \mathcal M^a_{ij}}{dt}(0)
:= \int_{\mathcal F(q)}(\nabla\overline{\varphi}'_i \cdot \nabla\overline{\varphi}_j 
+ \nabla\overline{\varphi}_i\cdot \nabla\overline{\varphi}'_j)\, {\rm d}x 
+ \int_{\partial \mathcal S_\kappa(q)} \nabla\varphi_i\cdot \nabla\varphi_j (\xi_\kappa^\ast\cdot n){\rm d}s.$$
By an integration by parts, since the potentials $ \overline{\varphi}'_i $
are harmonic in $\mathcal F(q)$
and their normal derivatives vanish on 
$\partial\mathcal F(q) \setminus \partial\mathcal S_{\kappa}(q)$
 (see Lemma \ref{lem-banane}),
\begin{gather*}
\frac{d \mathcal M^a_{ij}}{dt}(0)
=  \int_{\partial\mathcal S_\kappa(q)} 
\Big(\frac{\partial}{\partial n}(\frac{\partial\varphi_i}{\partial \boldsymbol q_\kappa}\cdot p_\kappa^\ast) \varphi_j  
+ \varphi_i  \frac{\partial}{\partial n}(\frac{\partial\varphi_j}{\partial \boldsymbol q_\kappa}\cdot p_\kappa^\ast)
 \Big)\, {\rm d}s
+ \int_{\partial \mathcal S_\kappa(q)} \nabla\varphi_i\cdot \nabla\varphi_j (\xi_\kappa^\ast\cdot n){\rm d}s.
\end{gather*}

We now use  \eqref{def_BC} and integrate by parts on the boundary. We obtain:
\begin{align*}
\frac{d \mathcal M^a_{ij}}{dt}(0)
:= &- 2 \int_{\partial\mathcal S_\kappa(q)} \frac{\partial\varphi_i}{\partial\tau} \frac{\partial\varphi_j}{\partial\tau} (\xi_\kappa^\ast\cdot n){\rm d}s
+ \int_{\partial\mathcal S_\kappa(q)} \frac{\partial\varphi_j}{\partial\tau} (\xi_i\cdot \tau)(\xi_\kappa^\ast\cdot n){\rm d}s \\
&+ \int_{\partial\mathcal S_\kappa(q)} \frac{\partial\varphi_i}{\partial\tau} (\xi_j\cdot \tau)(\xi_\kappa^\ast\cdot n)\, {\rm d}s
+ \int_{\partial\mathcal S_\kappa(q)}  \nabla\varphi_i \cdot \nabla\varphi_j  (\xi_\kappa^\ast\cdot n)\, {\rm d}s \\
&- \delta_{3,(i)} \delta_{\kappa,[i]} \int_{\partial\mathcal S_\kappa(q)}\varphi_j (\ell_\kappa^\ast\cdot \tau){\rm d}s
- \delta_{3,(j)} \delta_{\kappa,[j]} \int_{\partial\mathcal S_\kappa(q)}\varphi_i (\ell_\kappa^\ast\cdot \tau){\rm d}s.
\end{align*}
Then, since $\frac{\partial\varphi_i}{\partial n} =\xi_i \cdot n$ and $\frac{\partial\varphi_j}{\partial n} =\xi_j \cdot n$,  we observe that 
\begin{gather*}
- 2 \frac{\partial\varphi_i}{\partial\tau} \frac{\partial\varphi_j}{\partial\tau}
+
 \nabla\varphi_i \cdot \nabla\varphi_j 
 =
 \frac{\partial\varphi_i}{\partial n} (\xi_j \cdot n)
 +
  \frac{\partial\varphi_j}{\partial n} (\xi_i \cdot n)
 -  \nabla\varphi_i \cdot \nabla\varphi_j ,
 \end{gather*}
 and then that 
 \begin{gather*}
 \frac{\partial\varphi_i}{\partial n} (\xi_j \cdot n) + \frac{\partial\varphi_i}{\partial\tau} (\xi_j\cdot \tau) = \nabla \varphi_i \cdot \xi_j   \text{ and }
 \frac{\partial\varphi_j}{\partial n} (\xi_i \cdot n) + 
\frac{\partial\varphi_j}{\partial\tau} (\xi_i\cdot \tau) 
= \nabla \varphi_j \cdot \xi_i , 
\end{gather*}
to conclude. 
\end{proof}
Using now the expressions in Lemma~\ref{express:christoff} we get Proposition~\ref{Prop-L}.
\end{proof}
%
\section{Proof of Proposition~\ref{lem-DC}}
\label{Subsec:DC}
 This section is devoted to the proof of Proposition~\ref{lem-DC}.
We first  observe that Lemma \ref{lem37-GMS} can be adapted to determine the  shape derivative of the stream function $\psi_\mu$ in terms of  the 
 shape derivatives of the  constants $C_{\mu,\nu}$  associated with the boundaries $\partial \mathcal S_\nu$, for  $\nu$ in $\{1,\dots, N\}$.
\begin{lemma}
 \label{eqqe}
For  $\kappa,\mu$ in $\{1,\dots, N\}$, for any $p_\kappa^\ast$ in $ \R^3$, 
$\left(\frac{\partial\psi_{\mu}}{\partial \boldsymbol q_\kappa}\cdot p_\kappa^\ast\right)(q,\cdot) $  is harmonic in $\mathcal F(q)$ and satisfies for  $\nu$ in $\{1,\dots, N\}$,
\begin{equation} \label{def_BBCC}
\left(\frac{\partial\psi_{\mu}}{\partial \boldsymbol q_\kappa}\cdot p_\kappa^\ast\right)(q,\cdot) =\frac {\partial C_{\mu,\nu}(q)}{\partial \boldsymbol q_\kappa}\cdot p_\kappa^\ast - \Big(\frac {\partial \psi_{\mu}}{\partial n} \frac {\partial \boldsymbol \varphi_{\kappa}}{\partial n} \Big)(q,\cdot)\cdot p_\kappa^\ast \text{  on } \partial \mathcal S_\nu .
\end{equation}
\end{lemma}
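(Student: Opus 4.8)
The plan is to follow the same pattern as in the proofs of Lemma~\ref{lem-banane} and Lemma~\ref{lem37-GMS}: transport the Dirichlet problem \eqref{def_stream} defining $\psi_\mu$ by a one-parameter family of diffeomorphisms that moves only the solid $\mathcal S_\kappa$, and differentiate at $t=0$. Since for $\psi_\mu$ only the Dirichlet trace is prescribed on the inner boundaries, and it is moreover locally constant there, the computation is in fact lighter than the one in Lemma~\ref{lem-banane}: no second-order boundary term appears and the curvature of $\partial\mathcal F(q)$ never enters.

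Concretely, I would reuse the field $\xi_\kappa^\ast$ and its flow $T_\kappa(t,\cdot)$ from the proof of Proposition~\ref{Prop-L}; recall that $\xi_\kappa^\ast$ coincides with the rigid field $r_\kappa^\ast(\cdot-h_\kappa)^\perp+\ell_\kappa^\ast$ in a neighbourhood of $\mathcal S_\kappa(q)$ and vanishes away from it, so that for $|t|$ small $T_\kappa(t,\cdot)$ is a rigid displacement near $\mathcal S_\kappa(q)$ and the identity near $\partial\Omega$ and near the other solids. Hence $\mathcal S_\kappa(t):=T_\kappa(t,\mathcal S_\kappa(q))$, together with the unchanged $\mathcal S_\mu$ for $\mu\neq\kappa$, is the solid configuration of some $q(t)\in\mathcal Q$ with $\boldsymbol q_\kappa'(0)=p_\kappa^\ast$; I then set $\overline\psi_\mu(t,\cdot):=\psi_\mu(q(t),\cdot)$ and $C_{\mu,\nu}(t):=C_{\mu,\nu}(q(t))$. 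Classical elliptic theory in the moving domain, exactly as already used for the $\overline\varphi_i$'s, gives the differentiability at $t=0$ of $t\mapsto\overline\psi_\mu(t,T_\kappa(t,\cdot))$ and of $t\mapsto C_{\mu,\nu}(t)$, and, by definition of the shape derivatives, $\left(\frac{\partial\psi_\mu}{\partial\boldsymbol q_\kappa}\cdot p_\kappa^\ast\right)(q,\cdot)=\overline\psi_\mu'(0,\cdot)$ and $\frac{\partial C_{\mu,\nu}}{\partial\boldsymbol q_\kappa}\cdot p_\kappa^\ast=\frac{d}{dt}C_{\mu,\nu}(t)\big|_{t=0}$.

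Harmonicity is immediate: differentiating $\Delta\overline\psi_\mu(t,x)=0$ with respect to $t$ at a fixed interior point $x$ gives $\Delta\overline\psi_\mu'(0,x)=0$. For the boundary trace I would split $\partial\mathcal F(q)$ into its three pieces. On $\partial\Omega$ and on $\partial\mathcal S_\nu$ with $\nu\neq\kappa$ the boundary is kept fixed by $T_\kappa$, so differentiating $\overline\psi_\mu(t,\cdot)=0$ on $\partial\Omega$ and $\overline\psi_\mu(t,\cdot)=C_{\mu,\nu}(t)$ on $\partial\mathcal S_\nu$ yields $\overline\psi_\mu'(0,\cdot)=0$ on $\partial\Omega$ and $\overline\psi_\mu'(0,\cdot)=\frac{\partial C_{\mu,\nu}}{\partial\boldsymbol q_\kappa}\cdot p_\kappa^\ast$ on $\partial\mathcal S_\nu$; this is precisely \eqref{def_BBCC} on those boundaries, because $\frac{\partial\boldsymbol\varphi_\kappa}{\partial n}=\boldsymbol K_\kappa\equiv 0$ on $\partial\mathcal F(q)\setminus\partial\mathcal S_\kappa(q)$ makes the last term of \eqref{def_BBCC} vanish.

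The only substantive computation is on the moving boundary $\partial\mathcal S_\kappa(q)$. Differentiating at $t=0$ the identity $\overline\psi_\mu(t,T_\kappa(t,x))=C_{\mu,\kappa}(t)$, valid for $x\in\partial\mathcal S_\kappa(q)$, and using $\partial_tT_\kappa(0,x)=\xi_\kappa^\ast(x)$, gives
\[
\overline\psi_\mu'(0,x)+\nabla\psi_\mu(q,x)\cdot\xi_\kappa^\ast(x)=\frac{\partial C_{\mu,\kappa}}{\partial\boldsymbol q_\kappa}\cdot p_\kappa^\ast ,\qquad x\in\partial\mathcal S_\kappa(q).
\]
Decomposing $\xi_\kappa^\ast=(\xi_\kappa^\ast\cdot n)\,n+(\xi_\kappa^\ast\cdot\tau)\,\tau$ on $\partial\mathcal S_\kappa(q)$ and using that $\psi_\mu(q,\cdot)$ is constant there (hence $\frac{\partial\psi_\mu}{\partial\tau}=0$) reduces $\nabla\psi_\mu\cdot\xi_\kappa^\ast$ to $\frac{\partial\psi_\mu}{\partial n}\,(\xi_\kappa^\ast\cdot n)$; and by the very definition of the Kirchhoff data, $\xi_\kappa^\ast\cdot n=(\ell_\kappa^{\ast t},r_\kappa^\ast)\cdot\boldsymbol K_\kappa=\frac{\partial\boldsymbol\varphi_\kappa}{\partial n}\cdot p_\kappa^\ast$ on $\partial\mathcal S_\kappa(q)$. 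This yields \eqref{def_BBCC} for $\nu=\kappa$ and completes the proof. I do not expect a genuine obstacle: the two delicate ingredients, namely the rigidity of $T_\kappa$ near $\mathcal S_\kappa(q)$ and the smooth dependence on $t$ of both the shape-perturbed solution and the constants $C_{\mu,\nu}$, are already available from the proofs of Proposition~\ref{Prop-L} and Lemma~\ref{lem-banane}.
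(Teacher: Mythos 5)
Your proof is correct and is essentially the adaptation the paper itself intends: the paper gives no standalone argument for this lemma beyond remarking that Lemma~\ref{lem37-GMS} "can be adapted", and your combination of the rigid flow $T_\kappa(t,\cdot)$ from the proof of Proposition~\ref{Prop-L} with the differentiation of the boundary identity $\overline\psi_\mu(t,T_\kappa(t,x))=C_{\mu,\nu}(t)$, the vanishing of $\partial\psi_\mu/\partial\tau$ on $\partial\mathcal S_\nu$, and the identification $\xi_\kappa^\ast\cdot n=\frac{\partial\boldsymbol\varphi_\kappa}{\partial n}\cdot p_\kappa^\ast$ is exactly that adaptation. Your observation that the formula on $\partial\Omega$ and on $\partial\mathcal S_\nu$ for $\nu\neq\kappa$ follows trivially because $\boldsymbol K_\kappa$ vanishes there is also consistent with the statement as written.
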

%
%
Then, 
using  the expression of $E_{\kappa}$ when $\omega=0$, see  \eqref{sec-marre}, we deduce Proposition~\ref{lem-DC} from the following lemma, whose proof relies  on 
Lemma \ref{eqqe}.
\begin{lemma}\label{express:christoff2}
For every $\mu,\nu,\kappa $ in $\{1,\ldots,N\}$, for every $p_\kappa^\ast=(\ell_\kappa^{\ast t},r_\kappa^\ast)^t$ in $\R^3$, 
\[ 
\left(\frac{\partial C_{\mu,\nu}}{\partial \boldsymbol q_\kappa}\cdot p_\kappa^\ast\right)(q)
= 
-\int_{\partial\mathcal S_{\kappa}(q)}  \frac{\partial\psi_\mu}{\partial n} \frac{\partial\psi_{\nu}}{\partial n}  (\xi_\kappa^\ast\cdot n) \,{\rm d}s ,
\]
where  $\xi_\kappa^\ast=r_\kappa^\ast(\cdot-h_\kappa)^\perp+\ell_\kappa^\ast$.
\end{lemma}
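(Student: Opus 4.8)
The plan is to mimic the computation carried out in the proof of Lemma~\ref{express:christoff}, replacing the Kirchhoff potentials $\varphi_i,\varphi_j$ by the stream functions $\psi_\mu,\psi_\nu$ and using Lemma~\ref{eqqe} in place of Lemma~\ref{lem-banane}. First I would express $C_{\mu,\nu}(q)$ as a boundary integral that is amenable to a shape-derivative computation: since $\psi_\mu$ is harmonic, vanishes on $\partial\Omega$ and equals $C_{\mu,\nu'}$ on each $\partial\mathcal S_{\nu'}$, Green's identity applied to $\psi_\mu$ and $\psi_\nu$ gives
\begin{equation*}
\int_{\mathcal F(q)}\nabla\psi_\mu\cdot\nabla\psi_\nu\,{\rm d}x
= -\sum_{\nu'=1}^N C_{\mu,\nu'}(q)\int_{\partial\mathcal S_{\nu'}(q)}\frac{\partial\psi_\nu}{\partial n}\,{\rm d}s
= C_{\mu,\nu}(q),
\end{equation*}
using the normalization \eqref{circ-norma}. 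Thus $C_{\mu,\nu}(q)=\int_{\mathcal F(q)}\nabla\psi_\mu\cdot\nabla\psi_\nu\,{\rm d}x$, which is exactly the analogue of the formula $\mathcal M^a_{ij}(q)=\int_{\mathcal F(q)}\nabla\varphi_i\cdot\nabla\varphi_j\,{\rm d}x$ used before. (Symmetry of this expression also re-proves $C_{\mu,\nu}=C_{\nu,\mu}$.)

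Next I would differentiate this along the flow $T_\kappa(t,\cdot)$ generated by the extension $\xi_\kappa^\ast$, exactly as in Lemma~\ref{express:christoff}. Reynolds' transport theorem yields
\begin{equation*}
\left(\frac{\partial C_{\mu,\nu}}{\partial\boldsymbol q_\kappa}\cdot p_\kappa^\ast\right)(q)
= \int_{\mathcal F(q)}\big(\nabla\overline\psi_\mu'(0,\cdot)\cdot\nabla\psi_\nu + \nabla\psi_\mu\cdot\nabla\overline\psi_\nu'(0,\cdot)\big)\,{\rm d}x
+ \int_{\partial\mathcal S_\kappa(q)}\nabla\psi_\mu\cdot\nabla\psi_\nu\,(\xi_\kappa^\ast\cdot n)\,{\rm d}s.
\end{equation*}
Since $\overline\psi_\mu'(0,\cdot)$ is harmonic in $\mathcal F(q)$, vanishes on $\partial\Omega$ (the outer boundary does not move), and — by \eqref{circ-norma} differentiated in $t$, or directly by the normalization being preserved — has vanishing flux $\int_{\partial\mathcal S_{\nu'}}\partial_n\overline\psi_\mu'\,{\rm d}s=0$ across every inner boundary, an integration by parts turns the volume terms into boundary terms supported on $\bigcup_{\nu'}\partial\mathcal S_{\nu'}(q)$; on $\partial\mathcal S_{\nu'}$ the factor $\psi_\nu$ is the constant $C_{\nu,\nu'}$, which pairs with the zero flux of $\partial_n\overline\psi_\mu'$ — so, analogously to the $\varphi$ case but more simply, all the volume contributions vanish except through the boundary value of $\overline\psi_\mu'$ itself, which is furnished by Lemma~\ref{eqqe}.

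Concretely, writing the remaining boundary integrals on $\partial\mathcal S_\kappa(q)$ and substituting the trace $\overline\psi_\mu'(0,\cdot)=\big(\tfrac{\partial C_{\mu,\kappa}}{\partial\boldsymbol q_\kappa}\cdot p_\kappa^\ast\big) - \tfrac{\partial\psi_\mu}{\partial n}\tfrac{\partial\boldsymbol\varphi_\kappa}{\partial n}\cdot p_\kappa^\ast$ from \eqref{def_BBCC} (and the same for $\nu$), the constant parts again drop out against the vanishing fluxes, $\nabla\psi_\mu\cdot\nabla\psi_\nu$ on $\partial\mathcal S_\kappa$ reduces to $\tfrac{\partial\psi_\mu}{\partial n}\tfrac{\partial\psi_\nu}{\partial n}$ because both functions are constant (hence have zero tangential derivative) there, and using $\tfrac{\partial\boldsymbol\varphi_\kappa}{\partial n}\cdot p_\kappa^\ast=\xi_\kappa^\ast\cdot n$ on $\partial\mathcal S_\kappa$ one collects
\begin{equation*}
\left(\frac{\partial C_{\mu,\nu}}{\partial\boldsymbol q_\kappa}\cdot p_\kappa^\ast\right)(q)
= -\int_{\partial\mathcal S_\kappa(q)}\frac{\partial\psi_\mu}{\partial n}\frac{\partial\psi_\nu}{\partial n}\,(\xi_\kappa^\ast\cdot n)\,{\rm d}s,
\end{equation*}
as claimed. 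I expect the main obstacle to be the bookkeeping of the constant traces: one must check carefully that every term involving $C_{\mu,\nu'}$, $\tfrac{\partial C_{\mu,\nu'}}{\partial\boldsymbol q_\kappa}$ or $\overline\psi'$ evaluated on an inner boundary is multiplied by a quantity of vanishing boundary flux, so that these contributions — including the self-contribution on $\partial\mathcal S_\kappa$ from the moving boundary — genuinely cancel, leaving only the quadratic normal-derivative term; this is where Lemma~\ref{eqqe} and the flux normalizations \eqref{circ-norma} must be invoked in tandem. Finally, Proposition~\ref{lem-DC} follows by plugging this identity into the expression of $E_\kappa(q,0,\gamma)$ from Section~\ref{sec-marre}, expanding $\tfrac{\partial\psi_{\omega,\gamma}}{\partial n}\big|_{\omega=0}=\sum_\nu\gamma_\nu\tfrac{\partial\psi_\nu}{\partial n}$, and recognizing the resulting sum $-\tfrac12\sum_{\mu,\nu}\gamma_\mu\gamma_\nu\int_{\partial\mathcal S_\kappa}\tfrac{\partial\psi_\mu}{\partial n}\tfrac{\partial\psi_\nu}{\partial n}(\xi_\kappa^\ast\cdot n)\,{\rm d}s$ as $\tfrac12\,\partial_{\boldsymbol q_\kappa}\big(\sum_{\mu,\nu}\gamma_\mu\gamma_\nu C_{\mu,\nu}(q)\big)\cdot p_\kappa^\ast$.
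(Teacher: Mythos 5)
Your overall architecture (Reynolds transport on a Dirichlet--energy representation of $C_{\mu,\nu}$, then Lemma~\ref{eqqe} for the trace of the shape derivative) is the paper's, but two steps are wrong, and they happen to cancel. First, the sign of the energy representation: with $n$ the outward normal of $\mathcal F(q)$, Green's identity gives $\int_{\mathcal F(q)}\nabla\psi_\mu\cdot\nabla\psi_\nu\,{\rm d}x=\sum_{\nu'}C_{\mu,\nu'}\int_{\partial\mathcal S_{\nu'}}\partial_n\psi_\nu\,{\rm d}s=-C_{\mu,\nu}$ by \eqref{circ-norma}, so $C_{\mu,\nu}(q)=-\int_{\mathcal F(q)}\nabla\psi_\mu\cdot\nabla\psi_\nu\,{\rm d}x$, not $+$. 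Second, and more seriously, the constant parts of the trace of $\overline\psi_\mu'$ do \emph{not} ``drop out against the vanishing fluxes.'' Once you substitute \eqref{def_BBCC} you are pairing the constant $\frac{\partial C_{\mu,\nu'}}{\partial\boldsymbol q_\kappa}\cdot p_\kappa^\ast$ on $\partial\mathcal S_{\nu'}$ with $\partial_n\psi_\nu$, whose flux through $\partial\mathcal S_{\nu'}$ is $-\delta_{\nu,\nu'}$, not zero (the flux that \emph{does} vanish is that of $\partial_n\overline\psi_\mu'$, which is irrelevant in this pairing). Each of the two volume terms therefore reproduces the unknown $X:=\bigl(\frac{\partial C_{\mu,\nu}}{\partial\boldsymbol q_\kappa}\cdot p_\kappa^\ast\bigr)(q)$ on the right-hand side; with the correct sign convention one arrives at the implicit identity $X=2X+I$ with $I=\int_{\partial\mathcal S_\kappa(q)}\partial_n\psi_\mu\,\partial_n\psi_\nu\,(\xi_\kappa^\ast\cdot n)\,{\rm d}s$, and the lemma follows by \emph{solving} for $X$. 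This bootstrapping is the heart of the paper's proof and is absent from your write-up. As written, your two errors compensate ($-I-I+I=-I$), but fixing either one alone would give $X=I$ or $3X=-I$, so the derivation is not sound.

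There is, however, a genuinely cleaner route hiding in your remark that $\int_{\partial\mathcal S_{\nu'}}\partial_n\overline\psi_\mu'\,{\rm d}s=0$ (which is correct: the flux of a harmonic function is a homological invariant, so one may differentiate \eqref{circ-norma} on a fixed curve homologous to $\partial\mathcal S_{\nu'}$). If you integrate by parts the \emph{other} way, putting the normal derivative on $\overline\psi_\mu'$, you get $\int_{\mathcal F(q)}\nabla\overline\psi_\mu'\cdot\nabla\psi_\nu\,{\rm d}x=\sum_{\nu'}C_{\nu,\nu'}\int_{\partial\mathcal S_{\nu'}}\partial_n\overline\psi_\mu'\,{\rm d}s=0$, so both volume terms vanish identically and Reynolds' theorem gives $X=-I$ in one line, without Lemma~\ref{eqqe} at all. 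You gesture at both integrations by parts but do not commit to either consistently; pick one and carry it through with the correct sign of the energy representation.
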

%
\begin{proof}
We first observe that  for every $\mu,\nu$ in $\{1,\ldots,N\}$,
$$C_{\mu,\nu}(q)=-\int_{\mathcal F(q)}\nabla{\psi}_\mu\cdot \nabla{\psi}_\nu\, {\rm d}x .$$
By Reynold's transport theorem,   for every $\kappa$ in $\{1,\ldots,N\}$, for every $p_\kappa^\ast=(\ell_\kappa^{\ast t},r_\kappa^\ast)^t$ in $\R^3$, 
$$
\left(\frac{\partial C_{\mu,\nu}}{\partial \boldsymbol q_\kappa}\cdot p_\kappa^\ast\right)(q)
=- \int_{\mathcal F(q)} (\nabla (\frac{\partial\psi_{\mu}}{\partial \boldsymbol q_\kappa}\cdot p_\kappa^\ast)\cdot \nabla{\psi}_\nu
+ \nabla{\psi}_\mu\cdot \nabla (\frac{\partial\psi_{\nu}}{\partial \boldsymbol q_\kappa}\cdot p_\kappa^\ast))\, {\rm d}x 
- \int_{\partial \mathcal S_\kappa(q)} \nabla\psi_\mu\cdot \nabla\psi_\nu (\xi_\kappa^\ast\cdot n){\rm d}s,$$
where  $\xi_\kappa^\ast=r_\kappa^\ast(\cdot-h_\kappa)^\perp+\ell_\kappa^\ast$.
By integrations by parts and Lemma  \ref{eqqe}, since  the first term in the  right hand side term of \eqref{def_BBCC} is constant, 
\begin{align*}
\left(\frac{\partial C_{\mu,\nu}}{\partial \boldsymbol q_\kappa}\cdot p_\kappa^\ast\right)(q)
= & 2  \left(\frac{\partial C_{\mu,\nu}}{\partial \boldsymbol q_\kappa}\cdot p_\kappa^\ast\right)(q)
+ \int_{\partial\mathcal S_\kappa(q)}  \Big(\frac {\partial \psi_{\mu}}{\partial n} \frac {\partial \boldsymbol \varphi_{\kappa}}{\partial n} \Big)(q,\cdot)\cdot p_\kappa^\ast  \frac{\partial\psi_\nu}{\partial n} \,{\rm d}s \\
&+ \int_{\partial\mathcal S_\kappa(q)}  \Big(\frac {\partial \psi_{\nu}}{\partial n} \frac {\partial \boldsymbol \varphi_{\kappa}}{\partial n} \Big)(q,\cdot)\cdot p_\kappa^\ast  \frac{\partial\psi_\mu}{\partial n} \,{\rm d}s
- \int_{\partial\mathcal S_\kappa(q)} \frac{\partial\psi_\mu}{\partial n} \frac{\partial\psi_{\nu}}{\partial n}(\xi_\kappa^\ast\cdot n)\, {\rm d}s ,
\end{align*}
%
%
%
%
and, since  $ \frac{\partial\boldsymbol\varphi_{\kappa}}{\partial n} \cdot p_\kappa^\ast =  \xi_\kappa^\ast \cdot n$ on $\partial\mathcal S_\kappa(q)$, 
the conclusion follows.
\end{proof}
%

\par
\bigskip
\par
\noindent
{\bf Acknowledgements.} The authors are partially supported by the Agence Nationale de la Recherche, Project IFSMACS, grant ANR-15-CE40-0010.
The first, second and last authors are partially supported by the Agence Nationale de la Recherche,  Project SINGFLOWS grant ANR-18-CE40-0027-01. 
The fourth author is partially supported by the Agence Nationale de la Recherche, Project BORDS, grant ANR-16-CE40-0027-01, the Conseil R\'egionale d'Aquitaine, grant 2015.1047.CP, by the H2020-MSCA-ITN-2017 program, Project ConFlex, Grant ETN-765579. 
%
%
%
%
%
\def\cprime{$'$}

\end{document}